\documentclass{article}
\usepackage{amsmath,amsthm,amssymb,amsfonts}
\usepackage{graphicx} 
\usepackage{enumerate}
\usepackage{blkarray}
\usepackage{authblk}
\usepackage{subfigure}
\usepackage{array}
\usepackage{stackengine}
\usepackage{faktor}
\usepackage{fancyhdr}
\usepackage{graphicx}
\usepackage[svgnames]{xcolor}
\usepackage[labelfont=bf,width=0.8\textwidth,justification=centering]{caption}
\usepackage{tikz}
\usepackage{tikz-cd}
\usepackage{subfigure}
\usepackage[colorlinks=true,linkcolor=Indigo,citecolor=Indigo]{hyperref
 }
 \usetikzlibrary{calc,arrows}
\usepackage{mathtools}
\usepackage{color}

\def\p{\mathbf{p}}

\def\Mb{\mathbf{M}}
\def\q{\mathbf{q}}
\def\f{\mathbf{f}}
 \makeatletter
    \newenvironment{sqcases}{%
  \matrix@check\sqcases\env@sqcases
}{%
  \endarray\right.%
}
\def\env@sqcases{%
  \let\@ifnextchar\new@ifnextchar
  \left\lbrack
  \def\arraystretch{1.2}%
  \array{@{}l@{\quad}l@{}}%
}
\makeatother
\newtheorem{theorem}{Theorem}[section]

\newtheorem{corollary}[theorem]{Corollary}
\newtheorem{lemma}[theorem]{Lemma}
\theoremstyle{definition}
\newtheorem{example}[theorem]{Example}

\newtheorem{remark}[theorem]{Remark}

\setcounter{tocdepth}{1}

\numberwithin{equation}{section}
  
  \setcounter{MaxMatrixCols}{20}
  \title{Adaptive dynamics of alternating prisoner's dilemma with memory $N$}
\author[1,*]{N. Balabanova}
\author[1]{H. Duong}
\author[2]{C. Hilbe}
\affil[1]{School of Mathematics, University of Birmingham, Birmingham,~UK}
\affil[2]{Interdisciplinary Transformation University (IT:U), Linz, Austria}
\affil[*]{Corresponding author, n.balabanova@bham.ac.uk}
\date{\empty}

\begin{document}
\begin{titlepage}
\small
\maketitle


\begin{abstract}
\noindent
The Prisoner's Dilemma is used as a model in processes involving reciprocity; however, its classical setup can be insufficient in settings where the symmetry of the simultaneous decision making is broken -- for example, in donor and recipient processes. In the alternating Prisoner's Dilemma model the two players take turns choosing their strategy. Assuming a finite memory setup, we establish the mathematical aspects of the adaptive dynamics of the alternating Prisoner's Dilemma, paying particular attention to the case of memory 1. 
\end{abstract}
\footnotetext[1]{MHD and NB were funded by EPSRC (grant EP/Y008561/1). The collaboration between MHD and CH was funded by the Royal International Exchange Grant IES-R3-223047. }
\footnotetext[2]{This manuscript has no associated data}
\end{titlepage}

\normalsize
\tableofcontents

\section{Introduction}
The standard convention for modelling the Iterated Prisoner's dilemma is the simultaneous decision-making of the two players. Indeed, this provides a working model for a large number of interactions. Pehaps, the most well-known ones are stickback behaviour in the presence of a predator (\cite{milinski1987tit}) -- fish examine the predator in pairs, whereas staying back is obviously the safer option-- and  interactions between cleaner fish (Labroides dimidiatus) and their clients; the latter punish bad cleaners that bite rather than clean and reward good cleaners by employing their services again (\cite{bshary2023marine}). 

However, many reciprocal exchanges in nature do not lend themselves well  to modelling in a simultaneous setup. Perhaps the most famous example is the feeding habits of vampire bats (\cite{wilkinson1984reciprocal}) which have been observed donating blood to the starving members in their group. Similar reciprocal food exchanges have been exhibited by Norway rats and brown capuchin monkeys (\cite{schweinfurth2019reciprocity}).  Some birds take turns helping each other defeat their nests (\cite{earl2025cryptic}) and Belgian Shepherd dogs took turns pulling a platform with food for each other (\cite{gfrerer2017working}).  

Nowak  and Sigmund (\cite{nowak1994alternating}) and Park et al (\cite{park2022cooperation}) described the setup for donation games with one "leading" and one "following" player. The former starts the game and makes the first decision in each round; the latter  makes their choice using all available information: their memory about previous turns and the last action of the leading player. This setup is referred to in the literature as the alternating prisoner's dilemma. To distinguish it from the simultaneous case, we will append the epithet ``alternating" to anything pertaining to this model.  

In this paper, we address the challenges of the mathematical setup of adaptive dynamics alternating donation games. One would expect -- and we formulate it as a conjecture at the end of the paper-- that the difference between adaptive  dynamics of alternating and simultaneous prisoner's dilemmas will lessen as the length of memory $N$ increases -- indeed, if every piece of data about the history of the game is equally important, then the significance  of the last choice of the opponent pales in comparison to the rest of the information. However, for memory $N=1$ and $N=2$, some phenomena arise that appear exclusively in the alternating games case. 

 In the first part we set up the general formulation in the spirit of \cite{balabanova2024adaptive}. This is followed by giving a recursive construction for the transition matrix and the payoff vector. Naturally, some of the inherent discrete $\mathbb{Z}_2$-symmetries of the simultaneous case do not persist-- the change of the focal player, in fact, cannot be accomplished by matrix conjugation-- but we show that a subgroup of admissible matrices still is the group of discrete symmetries of the setup. 

We show that the alternating  adaptive dynamics possess the same  $\mathbb{Z}_2$ symmetry as the

In the last section, we examine in detail the  memory 1 game. It comes as a surprise that alternating memory-1 games have two ``one size fits all" invariants (i.e. those that are invariant no matter what the payoff vector is), and due to the form of these functions, their adaptive dynamics can be restricted to two-dimensional tori $\mathbb{T}^2$. We plot the dynamics, estimate the number and location of the equilibria points and discuss their number and stability.

\section{Setup}
\subsection{Adaptive dynamics}
\label{sec: adaptive dynamics}
Adaptive dynamics is one of the most used tools in evolutionary game theory. In the scope of this work, we will give a brief outlook of the setup and refer the reader to \cite{balabanova2024adaptive, stewart:scirep:2016},  and \cite{Mamiya:PRE:2020}. 

Assuming asexual reproduction of the population, we consider some numerical trait $x$ that characterises the majority. Furthermore, the population contains mutants displaying the trait $y$. 

The so-called payoff function $A(y,x)$ (also referred to as invasion fitness) describes the chances of $y$-players survving in the population of $x$-players. 

Mutants successfully invade and subsequently become the majority if and only if
\[
A(y,x)> A(x,x).
\]
This means that mutants' payoff competing with the ``regular" members of the population is higher than that of the ``regular" players competing between themselves. 

When describing invasive species (say, originating in different ecosystems), it makes sense to compare vastly different $x$ and $y$. However, in the context of gradual evolution within a single species the approach of \textit{meso-evolution} (see  \cite{metz2012adaptive})   is more appropriate. 

This approach implies assuming that $x-y<<1$ -- the mutants, despite having evolved, do not differ significantly from the regular members of the population. 

This property of $y$ suggests that with reasonable assumptions on smoothness of $A(x,y)$ in both variables, one may write
\[
A(x,y) = A(x,x) + (y-x)\frac{\partial (A(x,y)}{\partial y}\big\vert_{y=x} + \mathcal{o}(y-x)^2
\]
To maximise the payoff function, the following relation must hold:
\begin{equation}
    \label{eq: adaptive dynamics}
    \dot{y} = \left. \frac{\partial A(y,x)}{\partial y}\right|_{y=x}.
\end{equation}
First introduced by \cite{hofbauer1990adaptive}, these equations are called \textit{adaptive dynamics}; they were first introduced by Nowak and Sigmund \cite{nowak:AAM:1990}. 

As an evolutionary model, the constant evolving is assumed: along the trajectories of solutions of \eqref{eq: adaptive dynamics} individuals that were mutants an infinitesimally small amount of time ago become the ``new norm" and are subsequently replaced by new and better-adapted mutants. 

\subsection{Model}
\label{sec:model}
The setup of this case will rely heavily on the existing and well-studied one for the simultaneous games -- see, for example, \cite{balabanova2024adaptive, laporte2023adaptive}. We will use the same notation that was used there and explain the differences as we encounter them. 

The repeated reciprocal prisoner's dilemma is a repeated donation game between the two players, which we refer to by the $\p$-player and the $\q$-player.  One round of this game consists of both of them making a decision, choosing  between cooperating ($C$) and defecting ($D$). 

As mentioned above, the  setup we consider in this work  differs from the simultaneous one in  having a leading player or a \textit{leader} who gets to choose their action first in every round. Without loss of generality, we can assume this player to be $.\p$, making $\q$ the \textit{follower}.

The payoffs for a single round are set up as follows: $\p$ choosing $C$ means that they receive $a$ and $\q$ receives  $b$. The $p$-player opting for  $D$ means that they receive $c$ and $\q$ gets  $d$. The same happens when it is $\q$'s turn to make a choice. 

For modelling purposes, the  assumption usually  is 
\begin{equation}
\label{eq:conditions}
    \begin{cases}
        c>a,\\
        c-a<b-d
    \end{cases}
\end{equation}, and  some of the numbers can be negative.  

To bring it in accordance with the standard setup of the donation game, we  denote
\begin{equation*}
    \begin{split}
        R&=b+a,\\
        S&=a+d\\
        T&=c+b\\
        P&=c+d.
    \end{split}
\end{equation*}
Then $R,S,T,P$ have the meaning of the total winnings in one round; note that they conform to $R+T = S+P$. This notation allows us to consider each round of the game separately (except for the first one) and treat the payoff vector for alternating games in the same way that we treat the one for simultaneous ones.

The notion of \textit{memory} has to be altered slightly in comparison to the simultaneous case. Previously, saying that the players had \textit{memory} $N$ it was equivalent  to saying that  $\p$ and $\q$ remembered $N$ previous rounds -- this meant that both kept in mind the previous $2N$ actions. Now, since decision-making is staggered, we need to define the notion of memory slightly more carefully. We will still say that having memory $N$ means remembering $2N$ previous choices, but this will now have slightly different meanings for $\p$ and $\q$. 

That is, for $\p$ this is still the same as remembering the last $N$ rounds, since they make the choice first. On the other hand, for $\q$ this means remembering their decision from $N$ rounds ago, previous $N-1$ rounds, and the decision that the $\p$-player just made.

To define $\p$ and $\q$, we turn again  to \cite{balabanova2024adaptive} and CITE. Assuming that the players have memory $N$,  $\p,\q\in\mathbb{R}^{2N}$. These are the vectors of conditional probability of cooperation, conditional on the outcomes that the players remember. We refer to both of these vectors as \textit{strategies}.

Similar to \cite{balabanova2024adaptive}, we will denote the entries of these vectors  by $p_{i_1 i_2\ldots i_{2N-1}i_{2N}}$ and $q_{j_1 j_2\ldots j_{2N-1}j_{2N}}$, where $i_k, j_k\in\{C,D\}$. For the $\p$player, each pair of consecutive indices $i_{2k-1}i_{2k}$ is the encoding of the two decisions made  $k$ rounds ago, with the assumption that the $\p$-player's choice comes first.

For the $\q$-player, $i_1$ is their decision $N$ rounds ago, $i_{2k} i_{2k+1}$ are the choices from $k$ rounds ago, and $i_{2N}$ is the last decision of $\p$. 

\begin{example}
    Suppose that $N=3$. Then $p_{CDDCDD}$ denotes the conditional probability of the $\p$-player cooperating in the subsequent round, if three rounds ago $\p$ cooperated and $\q$ defected, two rounds ago $\p$ defected and $\q$ cooperated, and in the previous round both $\p$ and $\q$ defected. 

    With the same assumption on $N$, consider the element $q_{CDCCDC}$. This variable denotes the conditional probability of $\q$'s cooperation in the game with the following history: three rounds ago $\q$ cooperated, two rounds ago $\p$ defected and $\q$ cooperated, one round ago $\p$ cooperated and $\q$ defected, and in the current round $\p$ has already made a choice and cooperated. 
\end{example}

For vectors $\p$ and $\q$ we  arrange the entries in the lexicographic order, with $C$ coming before $D$:
 \[
\p = \begin{pmatrix}
    p_{CC\ldots CC}\\
    p_{CC\ldots CD}\\
    p_{CC\ldots DC}\\
    p_{CC\ldots DD}\\
    \vdots\\
    p_{DD\ldots DD}\\
\end{pmatrix}, \ \ \q = \begin{pmatrix}
    q_{CC\ldots CC}\\
    q_{CC\ldots CD}\\
    q_{CC\ldots DC}\\
    q_{CC\ldots DD}\\
    \vdots\\
    q_{DD\ldots DD}\\
\end{pmatrix}
\]
\begin{remark}
    In \cite{balabanova2024adaptive}, we used quite extensively our ability to ``pair up" elements of $\p$ and $\q$: by exchanging indices $i_{2k-1}$ and $i_{2k}$ for some $2N$-turple of an elenemt of $\p$, we can  get the number of the element of $\q$ that told the same story of the game. 

    In this setup, this property is obviously lost -- the elements of $\p$ contain some information that the $\q$-player has already forgotten, and $\q$ takes into account the last choice of $\p$. 
\end{remark}

For brevity, we will denote elements of vectors $\p$ and $\q$ as $p_{i}$ and $q_{j}$.

The process of repeated alternating games is a Markov chain process for any $N$ (see \cite{nowak1994alternating} for memory 1, and we will prove this statement below) with a transition matrix, which we denote $\Mb_N(\p,\q)$. The states of this chain are the outcomes of the previous rounds, and it can be clearly seen that the matrix $\Mb_N$ is a left stochastic matrix. 

Consequently, it has a left eigenvector $\nu_N(\p,\q)$ with an eigenvalue 1, i.e. a vector such that $\nu_N^T(\p,\q)\Mb_N(\p,\q) = \nu^T(\p,\q)$. This vector is referred to as the invariant distribution (\cite{norris1998markov}). 

Above, we have briefly discussed the payoffs $R,S,T,P$ from one round of the game. Identically to the simultaneous case, we can construct a vector $\f_N$ that encodes in lexicographic order the winnings in $N$ previous rounds. 
\begin{lemma}
\label{st: payoff vector correctly defined}
    The vector $\f_N$ is well-defined, i.e. the winnings of the two players are equal for identical game histories. 
\end{lemma}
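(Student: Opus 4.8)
The plan is to fix an arbitrary history string $i_1 i_2 \cdots i_{2N}$ and to write down, as an explicit sum over the $2N$ positions, the total winnings that the history records for each of the two players; I then check that the two sums agree term by term, which is exactly the asserted well-definedness. The only game-theoretic input I would isolate first is the per-move contribution: whenever \emph{any} player makes a move $m\in\{C,D\}$, the mover collects $a$ if $m=C$ and $c$ if $m=D$ (write this $g_{\mathrm{self}}(m)$), while the co-player collects $b$ if $m=C$ and $d$ if $m=D$ (write this $g_{\mathrm{oth}}(m)$). By the stipulation in Section~\ref{sec:model} that ``the same happens when it is $\q$'s turn'', these two functions are the same on $\p$-turns and $\q$-turns, and everything below reduces to counting which positions carry $g_{\mathrm{self}}$ versus $g_{\mathrm{oth}}$ for each player.

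Next I would read off the two accountings directly from the indexing conventions already fixed. In the $\p$-history each consecutive pair $i_{2k-1}i_{2k}$ records $\p$'s move followed by $\q$'s move, so $\p$'s winnings over the $N$ recorded rounds are $\sum_{k=1}^{N}\bigl(g_{\mathrm{self}}(i_{2k-1})+g_{\mathrm{oth}}(i_{2k})\bigr)$; equivalently, the odd positions carry $g_{\mathrm{self}}$ and the even positions carry $g_{\mathrm{oth}}$. In the $\q$-history the convention is shifted by one half-round: $i_1$ is $\q$'s own move, each pair $i_{2k}i_{2k+1}$ records $\p$'s move followed by $\q$'s move, and $i_{2N}$ is $\p$'s most recent move. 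The decisive observation is that this half-round shift puts \emph{all} of $\q$'s own moves on the odd positions $i_1,i_3,\dots,i_{2N-1}$ and \emph{all} of $\p$'s moves on the even positions $i_2,i_4,\dots,i_{2N}$. Hence $\q$'s winnings are again $\sum_{\mathrm{odd}} g_{\mathrm{self}}(i_\bullet)+\sum_{\mathrm{even}} g_{\mathrm{oth}}(i_\bullet)$, the very same function of $i_1\cdots i_{2N}$. This gives $\f_N^{\p}(i)=\f_N^{\q}(i)$ entrywise, so a single vector $\f_N$ is justified; on each pair $(i_{2k-1},i_{2k})$ the common value $g_{\mathrm{self}}+g_{\mathrm{oth}}$ is one of $R,S,T,P$, recovering the one-round payoffs.

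The step I expect to require the most care is the boundary bookkeeping in the $\q$-history, and this is where the ``except for the first one'' caveat enters. Because of the shift, $\q$'s self-contributions $g_{\mathrm{self}}(i_1),g_{\mathrm{self}}(i_3),\dots$ and its co-player contributions $g_{\mathrm{oth}}(i_2),g_{\mathrm{oth}}(i_4),\dots$ are drawn from round-windows offset by one, with the leading own-move $i_1$ and the trailing $\p$-move $i_{2N}$ acting as the two half-round endpoints. I would therefore verify explicitly that these two endpoints land in exactly the parity slots the formula requires, so that no position is double-counted or dropped and the offset between the two windows does not change the value assigned to a fixed index string. Once this alignment is confirmed, the term-by-term equality of the previous paragraph closes the argument; a sanity check at $N=1$, where both accountings collapse to $\f_1=(R,S,T,P)^{\mathsf T}$ in lexicographic order, makes the parity alignment transparent.
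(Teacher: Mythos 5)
Your proof is correct and takes essentially the same route as the paper's: both rest on the parity observation that under each player's indexing convention the odd positions $i_1,i_3,\dots,i_{2N-1}$ record the focal player's own moves and the even positions the co-player's moves, so the winnings are the identical function of the index string; your explicit bookkeeping with $g_{\mathrm{self}}$ and $g_{\mathrm{oth}}$ merely spells out what the paper compresses into ``due to how the payoffs per choice are defined and their inherent symmetries.'' The paper likewise verifies $N=1$ by direct computation of $\f_{\p}=\f_{\q}=(R,S,T,P)^{\mathsf T}$, matching your sanity check.
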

\begin{proof}
    We start with the toy case $N=1$. Then the two strategy vectors and the payoff vectors of the winnings of $\p$ and $\q$ respectively  are
    \[
    \p:=\begin{pmatrix}
        p_{CC}\\
        p_{CD}\\
        p_{DC}\\
        p_{DD}
    \end{pmatrix}, \ \ \q:=\begin{pmatrix}
        q_{CC}\\
        q_{CD}\\
        q_{DC}\\
        q_{DD}
    \end{pmatrix}, \ \ \f_{\p}:=\begin{pmatrix}
        f_{CC}^{\p}\\
        f_{CD}^{\p}\\
        f_{DC}^{\p}\\
        f_{DD}^{\p}
    \end{pmatrix}, \ \ \f_{\q}:=\begin{pmatrix}
        f_{CC}^{\q}\\
        f_{CD}^{\q}\\
        f_{DC}^{\q}\\
        f_{DD}^{\q}
    \end{pmatrix}
    \]

    Consider $f_{\p}$ first. The entry $f_{CC}^{\p}$ denotes the winnings of $\p$ after one complete round, where $\p$ and $\q$ both chose to cooperate. On the other hand, the element $f^{\q}_{CC}$ encodes the winnings of $\q$ after $\q$ decided to cooperate in the previous round and $\p$ cooperated in the current round. Using the notation from REF, we will compute both vectors:
    \[
    f_{\p} =\begin{pmatrix}
        a + b\\
        a + d\\
        c + b\\
        d + c
    \end{pmatrix},    f_{\q} =\begin{pmatrix}
      a + b\\
       a + d\\
       c+b\\
       c+d
    \end{pmatrix}
    \] 
    and $f_{\p} = \f_{\q}$. 

    For general $N$ (as, actually, for $N=1$) the proof is very straightforward. Consider two elements of $\p$ and $\q$ with the same indices: $p_{i_1i_2\ldots i_{2N}}$ and $q_{i_1i_2\ldots i_{2N}}$.  the index of $\p$ encodes first action of $\p$, then action of $\q$, then action of $\p$ and so on.  But in the index of $\q$ their action comes first, then goes the action of $\p$, and, again, so on. 

    Due to how the payoffs per choice are defined and their inherent symmetries, the payoffs after $2N$ rounds are the same. 
\end{proof}

To define adaptive dynamics, we apply the method identical to CITE. Define the \textit{payoff function}, i.e. the expected winnings of players, as 
\begin{equation}
    \label{eq: defining payoff function}
    A(\p,\q) := \left<\nu_N(\p,\q), \f_N\right>. 
\end{equation}
The works \cite{hilbe2017memory, laporte2023adaptive, balabanova2024adaptive} provide a recipe for writing the payoff function explicitly; we provide a similar lemma here. Since the relevant properties of the transition matrices are identical in the simultaneous and alternating cases, we do not provide the proof in the scope of this work. 

\begin{lemma}[\cite{hilbe2017memory}]
\label{st: payoff function}
Consider a transition matrix $\mathbf{M}_N$ for an alternating game with memory $N$ and subtract the identity matrix $I$ of the appropriate dimension from it. Replace the last column by the vector  $\mathbf{f}_N$ and denote this matrix by $\Mb_N^1$. To obtain the matrix $\Mb_N^2$, substitute $\mathbf{1}_N$ for $\mathbf{f}_N$ in $\Mb_N^1$.  Then the payoff function $A(\p,\q)$ is given by 
\begin{equation}
    \label{eq:payoff function}
A(\mathbf{p},\mathbf{q}) = \frac{\begin{vmatrix}\Mb_N^1\end{vmatrix}}{\begin{vmatrix}\Mb_N^2\end{vmatrix}}, 
\end{equation}
where $|M|$ is the determinant of the matrix $M$.
\end{lemma}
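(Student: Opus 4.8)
The plan is to read off \eqref{eq:payoff function} from the characterisation of $\nu_N$ as the left null vector of $\Mb_N-I$, together with a single Laplace expansion along the replaced column; this is the classical determinant computation for stationary averages of Markov chains, and the only game-specific ingredient---that the eigenvalue $1$ of $\Mb_N$ is simple---transfers verbatim from the simultaneous analysis, which is why it can be reused here. Write $n$ for the number of states, abbreviate $M=\Mb_N$ and $\f=\f_N$, and recall from \eqref{eq: defining payoff function} that $A(\p,\q)=\langle\nu_N,\f\rangle$, where $\nu_N^T M=\nu_N^T$ and $\langle\nu_N,\mathbf{1}_N\rangle=1$. Since the chain is ergodic, $1$ is a simple eigenvalue, so $\operatorname{rank}(M-I)=n-1$ and the left null space of $M-I$ is exactly the line spanned by $\nu_N^T$.

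First I would invoke the adjugate identity
\begin{equation*}
\operatorname{adj}(M-I)\,(M-I)=\det(M-I)\,I=0,
\end{equation*}
the second equality because $\det(M-I)=0$. Reading this relation row by row shows that every row of $\operatorname{adj}(M-I)$ is a left null vector of $M-I$, hence a scalar multiple of $\nu_N^T$. Specialising to the last row and using $\operatorname{adj}(M-I)_{nj}=(-1)^{n+j}m_{jn}$, where $m_{jn}$ is the minor of $M-I$ obtained by deleting row $j$ and column $n$, gives
\begin{equation*}
(\nu_N)_j=\kappa\,(-1)^{n+j}m_{jn},\qquad j=1,\dots,n,
\end{equation*}
for one nonzero constant $\kappa$ independent of $j$ (this is just Cramer's rule for the stationary vector).

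The key observation is that the right-hand side is, up to the factor $\kappa$, the Laplace expansion along the last column. Because $\Mb_N^1$ agrees with $M-I$ away from column $n$, deleting that column leaves the same minors $m_{jn}$, so expanding $\det\Mb_N^1$ along it gives $\det\Mb_N^1=\sum_j(-1)^{n+j}f_j\,m_{jn}=\kappa^{-1}\langle\nu_N,\f\rangle$; the identical computation with $\f$ replaced by $\mathbf{1}_N$ yields $\det\Mb_N^2=\kappa^{-1}\langle\nu_N,\mathbf{1}_N\rangle$. Dividing, the constant $\kappa$ cancels and
\begin{equation*}
\frac{|\Mb_N^1|}{|\Mb_N^2|}=\frac{\langle\nu_N,\f\rangle}{\langle\nu_N,\mathbf{1}_N\rangle}=\langle\nu_N,\f_N\rangle=A(\p,\q),
\end{equation*}
the last two steps using the normalisation $\langle\nu_N,\mathbf{1}_N\rangle=1$ and \eqref{eq: defining payoff function}.

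I anticipate that the only genuine obstacle is the simplicity of the eigenvalue $1$, equivalently $\operatorname{rank}(M-I)=n-1$: if it failed, $\operatorname{adj}(M-I)$ would vanish and the proportionality above would be vacuous, so irreducibility and aperiodicity of the alternating chain are doing the real work, and this is precisely the structural fact imported from the simultaneous setting. A pleasant corollary of the cancellation of $\kappa$ is that the formula is insensitive to the normalisation of $\nu_N$: for any nonzero scaling the quotient returns the correctly normalised stationary average, and $|\Mb_N^2|\neq0$ exactly when the stationary distribution is well defined. The sign bookkeeping in the Laplace expansion is routine and in any case immaterial, since any overall sign cancels in the ratio.
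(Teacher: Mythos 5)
Your proof is correct, and it fills in exactly the argument the paper itself omits: the paper states Lemma~\ref{st: payoff function} with a citation to \cite{hilbe2017memory} and explicitly declines to prove it, on the grounds that the relevant transition-matrix properties carry over from the simultaneous case. Your adjugate/Cramer argument---rows of $\operatorname{adj}(\Mb_N-I)$ are left null vectors of $\Mb_N-I$, hence proportional to $\nu_N^T$, so Laplace expansion of $|\Mb_N^1|$ and $|\Mb_N^2|$ along the replaced column produces $\kappa^{-1}\langle\nu_N,\f_N\rangle$ and $\kappa^{-1}\langle\nu_N,\mathbf{1}_N\rangle$ with a common constant that cancels---is precisely the standard proof underlying the cited source, and you correctly identify that the only structural input is simplicity of the eigenvalue $1$, i.e.\ ergodicity of the chain, which holds for strategies in the open cube and is implicitly assumed throughout the paper.

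One small point you could tighten: you assert the proportionality constant $\kappa$ is nonzero, but a priori the \emph{last row} of $\operatorname{adj}(\Mb_N-I)$ could vanish even though the adjugate itself is nonzero (it has rank one when $\operatorname{rank}(\Mb_N-I)=n-1$). The one-line fix is to use that $\Mb_N$ is row-stochastic in the paper's convention, so $(\Mb_N-I)\mathbf{1}_N=0$ and the right kernel is spanned by $\mathbf{1}_N$; hence $\operatorname{adj}(\Mb_N-I)=\mathbf{1}_N w^T$ for some nonzero row vector $w^T$, so \emph{every} row equals the same nonzero multiple of $\nu_N^T$. This also makes your closing remark precise: $|\Mb_N^2|=\kappa^{-1}\neq 0$ exactly under the ergodicity hypothesis, so the ratio in \eqref{eq:payoff function} is well defined whenever the stationary distribution is.
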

Lastly, as per the standard definition, the \textit{adaptive dynamics} for the strategy $\p$ will be given by 
\[
\dot{\p}:=\frac{\partial A(\p,\q)}{\partial \p}\Big\vert_{\p=\q}.
\]

\subsection{Transition matrix}
\label{sec:transition matrix}
As mentioned above, the repeated donation game with memory $N$ is a Markov process.
The transition matrix for memory 1 is  given in the work \cite{nowak1994alternating} and has the form 
\begin{equation}
    \label{eq: transition memory 1}
    \Mb_1{\p,\q} = \begin{pmatrix}
        p_{CC}q_{CC} & p_{CC}(1-q_{CC})& (1-p_{CC})q_{CD}& (1-p_{CC})(1-q_{CD})\\
         p_{CD}q_{DC} & p_{CD}(1-q_{DC})& (1-p_{CD})q_{DD}& (1-p_{CD})(1-q_{DD})\\
           p_{DC}q_{CC} & p_{DC}(1-q_{CC})& (1-p_{DC})q_{CD}& (1-p_{DC})(1-q_{CD})\\
         p_{DD}q_{DC} & p_{DC}(1-q_{DC})& (1-p_{DD})q_{DD}& (1-p_{DD})(1-q_{DD})\\
         \end{pmatrix}
\end{equation}
Indeed, one can check that 
\begin{small}
\begin{equation*}
\begin{split}
  &  \begin{pmatrix}
\nu_{CC}(n+1)&\nu_{CD}(n+1)& \nu_{DC}(n+1)& \nu_{DD}(n+1) \end{pmatrix}=\\& \begin{pmatrix}
\nu_{CC}(n)&\nu_{CD}(n)& \nu_{DC}(n)& \nu_{DD}(n)\end{pmatrix}\begin{pmatrix}
        p_{CC}q_{CC} & p_{CC}(1-q_{CC})& (1-p_{CC})q_{CD}& (1-p_{CC})(1-q_{CD})\\
         p_{CD}q_{DC} & p_{CD}(1-q_{DC})& (1-p_{CD})q_{DD}& (1-p_{CD})(1-q_{DD})\\
           p_{DC}q_{CC} & p_{DC}(1-q_{CC})& (1-p_{DC})q_{CD}& (1-p_{DC})(1-q_{CD})\\
         p_{DD}q_{DC} & p_{DC}(1-q_{DC})& (1-p_{DD})q_{DD}& (1-p_{DD})(1-q_{DD})\\
         \end{pmatrix} 
         \end{split}
\end{equation*}
\end{small}

In particular, for example, 
\begin{equation*}
\begin{split}
\nu_{CC}(n+1) = \nu_{CC}(n) p_{CC}q_{CC} + \nu_{CD}(n)p_{DC}q_{DC} + \nu_{DC}(n)p_{DC}q_{CD} + \nu_{DD}(n)p_{DD}q_{DC}.
\end{split}
\end{equation*}
Analogous relations hold for the rest of the components.

Now we proceed to construct the transition matrix for memory $N$.

It is clear that the transition matrix will have the same four-diagonal structure as the one for the regular game. This follows from the expression of  $\nu_{i_1 i_2i_3 i_4\ldots i_{2N-1} i_{2N}}(n+1)$ as a linear combination of $\nu_{CC\ldots i_{2N-3} i_{2N-2}}(n)$,  $\nu_{CD\ldots i_{2N-3} i_{2N-2}}(n)$, $\nu_{DC\ldots i_{2N-3} i_{2N-2}}(n)$ and $\nu_{DD\ldots i_{2N-3} i_{2N-2}}(n)$. 

The particulars of linear combination depend on the last two elements of the indices $i_{2N-1}$ and $i_{2N}$. 

For example, if $i_{2N-1} = C i_{2N}= D$, we have, as in CITE
\begin{small}
    \begin{equation}
        \begin{split}
         \nu_{i_1 i_2i_3 i_4\ldots CD}(n+1)  &= \nu_{CC\ldots i_{2N-3} i_{2N-2}}(n) p_{CC\ldots i_{2N-3} i_{2N-2}}(1- q_{C\ldots i_{2N-2} C}) \\&+ \nu_{CD\ldots i_{2N-3} i_{2N-2}}(n) p_{CD\ldots i_{2N-3} i_{2N-2}}(1- q_{D\ldots i_{2N-2} C})\\&+\nu_{DC\ldots i_{2N-3} i_{2N-2}}(n) p_{DC\ldots i_{2N-3} i_{2N-2}}(1- q_{C\ldots i_{2N-2} C})\\&+\nu_{DD\ldots i_{2N-3} i_{2N-2}}(n) p_{DD\ldots i_{2N-3} i_{2N-2}}(1- q_{D\ldots i_{2N-2} C})
        \end{split}
    \end{equation}
\end{small}
Note that the last index of all elements of $\q$ in the formula above must be $C$, since the $\p$-player is assumed to have cooperated. 

We can conclude something else from the formula above: that not only is the four-diagonal structure of the transition matrix preserved, the placement of the elements of $\p$ has to be the same as in the simultaneous case. However, it behaves differently in $\q$.

In summary, when it is the $\q$-player's turn to choose, they ``forget" the choice $i_1$.  This entails that the list of remembered choices starts with the action of the $q$-strategist,and one only needs to append the last decision of the $p$-strategist to it. 

Therefore, if we denote $\underline{i} = i_2\ldots i_{2N}$, the quadruples in the matrix $M(\p,\q)$ will have the form 
\begin{small}
\[
p_{i_1\underline{i}}q _{\underline{i}C} \ \ \ p_{i_1\underline{i}}(1-q _{\underline{i}C}) \ \ \  (1-p_{i_1\underline{i}})q _{\underline{i}D} \ \ \ (1-p_{i_1\underline{i}})(1-q _{\underline{i}D}) 
\]
\end{small}

We assmeble these observations in the following lemma

\begin{lemma}
\label{lem: construction matrix}
For $N\!\ge\!2$, let the $2^{2(N-1)}\times2^{2(N-1)}$ transition matrix for the alternating memory-$(N\!-\!1)$ game have the form
\begin{equation}
    \label{eq: memory N-1 matrix}
  \mathbf{M}_{N-1}:=  \begin{pmatrix}
        M_1\\
        M_2\\
        M_3\\
        M_4
    \end{pmatrix}
\end{equation}
where $M_1, M_2, M_3, M_4$ are $2^{2(N-2)}\times2^{2(N-1)}$ matrices that are obtained by  cutting the matrix $\mathbf{M}_{N-1}$ horizontally into four submatrices of equal dimensions. 

Then the  transition  matrix for the  memory $N$ game has the following form:
\begin{equation}
    \label{eq: memory N matrix}
 \mathbf{M}_N  =   \begin{pmatrix}
        M'_1& 0& 0&0\\
        0&M'_2&0&0\\
        0&0&M'_3&0\\
        0&0&0&M'_4\\
        M''_1& 0& 0&0\\
        0&M''_2&0&0\\
        0&0&M''_3&0\\
        0&0&0&M''_4\\
        M'''_1& 0& 0&0\\
        0&M'''_2&0&0\\
        0&0&M'''_3&0\\
        0&0&0&M'''_4\\
        M''''_1& 0& 0&0\\
        0&M''''_2&0&0\\
        0&0&M''''_3&0\\
        0&0&0&M''''_4
    \end{pmatrix}.
\end{equation}
Here,  "0" denotes a $2^{2(N-2)}\times2^{2(N-1)}$  matrix consisting of zeros. The dashed matrices are constructed in the following way: to the beginning of the list of indices of each $p_{i_1\ldots i_{2(N-1)}}$ from any of the matrices $M_i$ we append   $CC$, $CD$, $DC$ or $DD$, to make it respectively into $M'_i$, $M''_i$, $M'''_i$ or $M'''_i$. To  the indeices of $q_{i_1\ldots i_{2(N-1)}}$  we append the second and the third indices of the new $\p$. 
\end{lemma}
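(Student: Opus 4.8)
The plan is to verify the claimed block form of $\Mb_N$ directly from the transition rule established just above the lemma, by tracking how the $2N$-index labelling of states behaves under one step of the alternating dynamics. First I would fix notation: index both the rows and the columns of $\Mb_N$ by $2N$-tuples $i_1 i_2 \cdots i_{2N}$ in lexicographic order, a row recording the current state (the last $N$ rounds) and a column the next state. I recall from the quadruple computation preceding the lemma that from state $i_1 \cdots i_{2N}$ the $\p$-strategist cooperates with probability $p_{i_1 \underline{i}}$, where $\underline{i} = i_2 \cdots i_{2N}$, and then the $\q$-strategist cooperates with probability $q_{\underline{i} C}$ or $q_{\underline{i} D}$ according to $\p$'s realized move, with resulting next state $i_3 \cdots i_{2N} j_1 j_2$ where $j_1, j_2$ are the new moves. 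This transition rule is the only dynamical input the proof needs.

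Second, I would establish the sparsity pattern. Grouping the $2^{2N}$ rows into $16$ blocks according to the first four indices $i_1 i_2 i_3 i_4$, each block contains $2^{2(N-2)}$ rows; grouping the columns into $4$ blocks according to the first two indices of the next state, each contains $2^{2(N-1)}$ columns. Since every next state begins with $i_3 i_4$, each nonzero entry of the row-block labelled $i_1 i_2 i_3 i_4$ lies in the single column-block labelled $i_3 i_4$. This forces exactly the zero pattern displayed in \eqref{eq: memory N matrix}, with $i_1 i_2 \in \{CC,CD,DC,DD\}$ selecting the four ``prime levels'' and $i_3 i_4$ selecting the four column positions.

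Third, and this is the heart of the argument, I would exhibit the index correspondence realizing each nonzero block as a relabelling of a horizontal slice $M_j$ of $\Mb_{N-1}$. Writing a memory-$N$ state in the $i_1 i_2$-block as $(i_1, i_2, k_1, \ldots, k_{2N-2})$, the tail $k_1 \cdots k_{2N-2}$ ranges over precisely the states of the memory-$(N-1)$ game, and its leading pair $k_1 k_2 = i_3 i_4$ selects the slice $M_j$. Under this bijection the $\p$-label $p_{i_1 i_2 k_1 \cdots k_{2N-2}}$ is obtained from $p_{k_1 \cdots k_{2N-2}}$ by prepending $i_1 i_2$, while the $\q$-label $q_{i_2 k_1 \cdots k_{2N-2} j_1}$ is obtained from $q_{k_2 \cdots k_{2N-2} j_1}$ by prepending $i_2 k_1$, which are exactly the second and third symbols of the enlarged $\p$-label. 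Matching these against the quadruple entries shows the nonzero block equals $M_j$ under precisely the substitutions defining $M'_i, M''_i, M'''_i, M''''_i$, and checking that the lexicographic order of the tail $(i_5, \ldots, i_{2N}, j_1, j_2)$ is preserved confirms the submatrices occupy the stated positions.

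The step I expect to demand the most care is this last index bookkeeping for $\q$. Because the follower forgets the oldest move $i_1$ and appends the leader's fresh choice, the $\q$-label is shifted asymmetrically relative to the $\p$-label, and one must verify that the prepended pair is genuinely the second and third symbols of the enlarged $\p$-label in all four $i_1 i_2$-cases, not coincidentally in one. Confirming the lexicographic alignment of the columns within each block, so that the relabelled $M_j$ drops into place without an intervening permutation, is the other routine but delicate point; everything else reduces to the structural observations above.
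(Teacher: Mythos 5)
Your proposal is correct and takes essentially the same route as the paper, which establishes the lemma by the observations assembled immediately before its statement: the one-step update of $\nu$ forces the repeated block-diagonal sparsity pattern, the $\p$-labels are placed as in the simultaneous game with a pair prepended, and the $\q$-labels arise because the follower forgets $i_1$ and appends the leader's fresh move, i.e.\ by prepending the second and third symbols of the enlarged $\p$-label. Your explicit bookkeeping of the sixteen row-blocks against four column-blocks and the lexicographic alignment of the relabelled slices $M_j$ is simply a more careful write-up of that same argument.
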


For example, if  one of the components of $\p$ used to have the index $p_{CC\   CD \  DD}$ and was in the first element of a quadruple, then its corresponding component of $\q$ will be $q_{CC\ DD\ DC}$. If we add, say, $CD$ to the index of $p_{CC\   CD \  DD}$, then we need to add $D$ to $q_{CC\ DD\ DC}$ at the beginning and $C$ or $D$ at the and, depending on whether it lies in the first/second or third/forth element of the quadruple.

\begin{remark}
This matrix is principally different from the matrix for the simultaneous game. In the former case, each $\p$-element corresponds to one and only one $\q$-element, and both of them occur only in one quadruple. Here, in each quadruple, two $\q$-elements correspond to one $\p$-element, and therefore, $\q$-elements ``run out" twice as fast. 

Therefore, the matrix structure in $\q$ is repeated, i.e. if we divide the matrix horizontally in half, we have the same pattern in $\q$. Additionally, if one goes along rows from left to right the index of $\q$ increases with every two quadruples.

The elements of $\p$ are, hover, placed in the same positions as they were in the transition matrix for the simultaneous game. 

Thanks to Lemma \ref{st: payoff vector correctly defined}, we can think of the payoff vector $\mathbf{f}_N$  in terms of the variables $R,S,T$ and $P$ -- and construct the payoff vector using only the winnings of the $\p$-player. This allows us to formulate the following 
\begin{lemma}
\label{st: recursive payoff vector}
Given the payoffs $R,S,T,P$ of the prisoner's dilemma, the payoff vector $\mathbf{f}_N$ for the memory-$N$ game can be constructed recursively. Let 
    \begin{equation}
    \label{eq: recursive noq quite payoff}
    \begin{split}
       \widetilde{ \mathbf{f}}_1  = \begin{pmatrix} R\\S\\T\\P \end{pmatrix}, \ \ \widetilde{\mathbf{f}}_{N} = \begin{pmatrix} \widetilde{\mathbf{f}}_{N-1}+R\mathbf{1}_{N-1}\\\widetilde{\mathbf{f}}_{N-1}+S\mathbf{1}_{N-1}\\\widetilde{\mathbf{f}}_{N-1}+T\mathbf{1}_{N-1}\\\widetilde{\mathbf{f}}_{N-1}+P\mathbf{1}_{N-1}\end{pmatrix}, 
        \end{split}
    \end{equation}
    
    where $\mathbf{1}_N$ is a $2^{2N}$ dimensional vector whose entries are all equal to 1. Then 
    \begin{equation}
        \mathbf{f}_N =\frac{1}{N}\widetilde{\mathbf{f}_N}.
    \end{equation}
\end{lemma}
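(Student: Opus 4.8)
The plan is to prove by induction on $N$ that the un-normalised vector $\widetilde{\f}_N$ records, entry by entry, the \emph{total} winnings of the $\p$-player across the $N$ rounds stored in a given history; dividing by $N$ then yields the per-round payoff vector $\f_N$. First I would fix the interpretation. By Lemma \ref{st: payoff vector correctly defined} the payoff vector may be built from the $\p$-player's winnings alone. For a single round in which $\p$ plays $x$ and $\q$ plays $y$, the $\p$-player collects $g(x,y)$, where, using $R=b+a$, $S=a+d$, $T=c+b$, $P=c+d$, one has $g(C,C)=R$, $g(C,D)=S$, $g(D,C)=T$ and $g(D,D)=P$. For a memory-$N$ history encoded by $i_1 i_2\ldots i_{2N}$, each consecutive index pair $(i_{2k-1},i_{2k})$, $k=1,\ldots,N$, records one stored round, so the total $\p$-winnings over the remembered window equal $\sum_{k=1}^N g(i_{2k-1},i_{2k})$. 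The claim to establish is therefore that the $i_1\ldots i_{2N}$-entry of $\widetilde{\f}_N$ equals this sum, in the lexicographic ordering fixed in Section~\ref{sec:model}.

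The base case $N=1$ is exactly the computation in the proof of Lemma \ref{st: payoff vector correctly defined}, where $\widetilde{\f}_1=(R,S,T,P)^T$ lists $g(i_1,i_2)$ over the four states in lexicographic order. For the inductive step I would peel off the \emph{oldest} round, i.e.\ the most significant index pair $(i_1,i_2)$. Since $C<D$, the lexicographic order splits the $2^{2N}$ entries into four consecutive blocks of length $2^{2(N-1)}$ according to $(i_1,i_2)\in\{CC,CD,DC,DD\}$, and within each block the truncated index $i_3\ldots i_{2N}$ runs lexicographically over all memory-$(N-1)$ histories. Reindexing $j_\ell := i_{\ell+2}$ identifies $i_3\ldots i_{2N}$ with a genuine memory-$(N-1)$ history in the same convention, whose stored rounds are exactly the $N-1$ most recent ones; by the induction hypothesis its total $\p$-winnings are the corresponding entry of $\widetilde{\f}_{N-1}$. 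Splitting the sum as $\sum_{k=1}^N g(i_{2k-1},i_{2k}) = g(i_1,i_2)+\sum_{k=2}^N g(i_{2k-1},i_{2k})$ then shows that the $(i_1,i_2)=CC$ block equals $\widetilde{\f}_{N-1}+R\mathbf{1}_{N-1}$, and likewise the $CD$, $DC$, $DD$ blocks add $S$, $T$ and $P$, which is precisely the recursion \eqref{eq: recursive noq quite payoff}. Dividing by $N$ gives $\f_N=\tfrac1N\widetilde{\f}_N$ as the average per-round payoff.

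I expect the only real obstacle to be the bookkeeping that aligns the lexicographic enumeration with the block decomposition: one must verify that the most significant pair $(i_1,i_2)$ indeed indexes the top-level four-block partition (rather than the last pair), and that the residual string $i_3\ldots i_{2N}$ is a bona fide memory-$(N-1)$ history under the same ordering and the same payoff convention $g$. Once this indexing is pinned down, the splitting of the round-sum is immediate and the recursion follows with no further computation.
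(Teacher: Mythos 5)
Your proof is correct and follows essentially the same route as the paper: after invoking Lemma \ref{st: payoff vector correctly defined} to reduce to the $\p$-player's winnings alone, the recursion \eqref{eq: recursive noq quite payoff} is exactly the lexicographic block construction that the paper imports wholesale from the simultaneous case of \cite{balabanova2024adaptive}. The only difference is completeness: the paper's proof is a two-line citation, whereas you carry out the induction on the leading index pair $(i_1,i_2)$ explicitly, including the bookkeeping that the residual string $i_3\ldots i_{2N}$ is a lexicographically ordered memory-$(N-1)$ history and that the round-sum splits off $R$, $S$, $T$ or $P$ accordingly.
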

\begin{proof}
    As we have stated above, we can construct the payoff vector just using the payoffs of $\p$.  

    Then the entire construction will be exactly the same as that in CITE. 
\end{proof}

\end{remark}

\subsection{Symmetries of the setup}
\label{sec:symmetries setup}
We follow the introduction of the mathematical setup by the same considerations as in CITE. donation games have some inherent symmetries: for simultaneous games, we can change the focal player, as well as change the notion of cooperation and defection for either or both players. 

In this section, we set out to uncover the mathematical formulation of the symmetries of an alternating game. Again, we leverage on the results of CITE.

The realisation of the symmetries will be as follows. 
Consider an orthogonal matrix  $\mathbf{O}$  and the transformations of the form

\begin{equation} \label{Eq:OTransformation}
\nu\mapsto \mathbf{O}\nu, \ \mathbf{M}_N\mapsto \mathbf{O}\mathbf{M}_N\mathbf{O}^T, \mathbf{f}_N\mapsto \mathbf{O}\mathbf{f}_N.
\end{equation}
to all elements of the game. 
\begin{lemma}[\cite{balabanova2024adaptive}]
\label{st: lemma classification}
  The transformation \eqref{Eq:OTransformation} with an orthogonal matrix $\mathbf{O}$ leaves the payoff function invariant.
\end{lemma}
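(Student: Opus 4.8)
The plan is to use the fact that, by \eqref{eq: defining payoff function}, the payoff function is simply the Euclidean inner product $A(\p,\q)=\langle\nu_N,\f_N\rangle$, together with the defining property of orthogonal matrices, $\mathbf{O}^T\mathbf{O}=I$, which preserves inner products. The whole statement then reduces to two short observations: that the invariant distribution transforms covariantly under \eqref{Eq:OTransformation}, and that the inner product of the two transformed vectors is left unchanged.

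First I would check the covariance of the invariant distribution. Recall that $\nu_N$ is characterised by $\nu_N^T\mathbf{M}_N=\nu_N^T$. Writing $\mathbf{M}_N':=\mathbf{O}\mathbf{M}_N\mathbf{O}^T$ for the transformed transition matrix, a one-line substitution shows that $\mathbf{O}\nu_N$ is again a left $1$-eigenvector:
\[
(\mathbf{O}\nu_N)^T\mathbf{M}_N'=\nu_N^T\mathbf{O}^T\mathbf{O}\mathbf{M}_N\mathbf{O}^T=\nu_N^T\mathbf{M}_N\mathbf{O}^T=\nu_N^T\mathbf{O}^T=(\mathbf{O}\nu_N)^T,
\]
where the second equality uses $\mathbf{O}^T\mathbf{O}=I$ and the third uses the eigenvector equation. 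Hence the invariant distribution of the transformed game is $\nu_N'=\mathbf{O}\nu_N$. The invariance then follows at once, since
\[
\langle\nu_N',\f_N'\rangle=\langle\mathbf{O}\nu_N,\mathbf{O}\f_N\rangle=\nu_N^T\mathbf{O}^T\mathbf{O}\f_N=\nu_N^T\f_N=A(\p,\q),
\]
again by $\mathbf{O}^T\mathbf{O}=I$. Orthogonality is thus used twice, in both places only through the identity $\mathbf{O}^T\mathbf{O}=I$, so the argument is insensitive to the combinatorial form of $\mathbf{O}$.

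The one point I expect to require care---and which is the main obstacle---is the normalisation of $\nu_N$. As an invariant distribution $\nu_N$ is a probability vector, $\langle\nu_N,\mathbf{1}\rangle=1$, whereas the identification $\nu_N'=\mathbf{O}\nu_N$ a priori pins the eigenvector down only up to scale. For $\nu_N'$ to again be a genuine probability vector, so that both sides of the computation use the same convention, one needs $\langle\mathbf{O}\nu_N,\mathbf{1}\rangle=1$, that is $\mathbf{O}^T\mathbf{1}=\mathbf{1}$, equivalently $\mathbf{O}\mathbf{1}=\mathbf{1}$ since $\mathbf{O}$ is orthogonal. I would close this gap by noting that the admissible symmetry matrices realising the game symmetries (change of focal player, relabelling of $C$ and $D$) are permutation matrices, for which $\mathbf{O}\mathbf{1}=\mathbf{1}$ holds automatically; this also guarantees that $\mathbf{M}_N'=\mathbf{O}\mathbf{M}_N\mathbf{O}^T$ is once more a stochastic matrix, so that its invariant distribution exists and is unique. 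Alternatively, one may appeal to the determinant formula \eqref{eq:payoff function}, which evaluates the scale-invariant ratio $\langle\nu_N,\f_N\rangle/\langle\nu_N,\mathbf{1}\rangle$ and hence bypasses the normalisation issue entirely; the price is that the column-replacement defining $\mathbf{M}_N^1$ and $\mathbf{M}_N^2$ does not commute cleanly with conjugation by $\mathbf{O}$, which is why I would favour the direct inner-product route above.
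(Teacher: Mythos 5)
Your proof is correct and is essentially the argument the paper relies on: the lemma is stated here without proof, citing \cite{balabanova2024adaptive}, where the invariance is established exactly as you do it --- the left $1$-eigenvector transforms covariantly, $\nu_N\mapsto\mathbf{O}\nu_N$, under $\mathbf{M}_N\mapsto\mathbf{O}\mathbf{M}_N\mathbf{O}^T$, and the inner product $\langle\nu_N,\f_N\rangle$ is preserved via $\mathbf{O}^T\mathbf{O}=I$. Your extra care about normalisation (needing $\mathbf{O}\mathbf{1}=\mathbf{1}$ so that $\mathbf{O}\nu_N$ is again a probability vector, which holds automatically once admissibility forces $\mathbf{O}$ to be a permutation matrix, as the paper's subsequent lemma confirms) correctly closes the only genuine subtlety in the formal computation.
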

Similarly, we demand that the matrix $\mathbf{O}\Mb\mathbf{O}^T$ is still a transition matrix of a game. If we call such matrices $\mathbf{O}$ \textit{admissible}, the following immediately follows: 
\begin{lemma}
      Any admissible orthogonal matrix $\mathbf{O}$ is a permutation matrix. 
\end{lemma}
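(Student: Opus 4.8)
The plan is to reduce the statement to two facts: (A) admissibility forces the orthogonal $n\times n$ matrix $\mathbf{O}$ (here $n=2^{2N}$) to be entrywise nonnegative, and (B) an orthogonal matrix with nonnegative entries is a permutation matrix. Fact (B) is the easy half and I would dispatch it first. If $\mathbf{O}$ is orthogonal then its columns $\mathbf{c}_1,\dots,\mathbf{c}_n$ are orthonormal, and if in addition each $\mathbf{c}_j\ge 0$ entrywise, then for $j\neq k$ the relation $\langle \mathbf{c}_j,\mathbf{c}_k\rangle=\sum_l (\mathbf{c}_j)_l(\mathbf{c}_k)_l=0$ is a sum of nonnegative terms, forcing $(\mathbf{c}_j)_l(\mathbf{c}_k)_l=0$ for every $l$; hence the columns have pairwise disjoint supports. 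Since $n$ pairwise disjoint nonempty subsets of an $n$-element index set must all be singletons, each column is supported on a single coordinate, and being a nonnegative unit vector it equals a standard basis vector. Distinct columns give distinct basis vectors, so $\mathbf{O}$ is a permutation matrix.

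For fact (A) I would use the two components of the admissibility requirement built into \eqref{Eq:OTransformation}: that $\mathbf{O}\mathbf{M}_N\mathbf{O}^T$ is again a stochastic game transition matrix, and that $\nu\mapsto\mathbf{O}\nu$ sends an invariant distribution to an invariant distribution, hence a probability vector to a probability vector. The cleanest route is through the invariant distributions: if every vertex $e_j$ of the probability simplex occurs as an invariant distribution of some game, then $\mathbf{O}e_j$ must be a probability vector, so the $j$-th column of $\mathbf{O}$ is nonnegative; ranging over $j$ gives $\mathbf{O}\ge 0$, and (A) follows.

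For memory $N=1$ this hypothesis is immediate. Taking both players to use constant deterministic strategies realizes, via \eqref{eq: transition memory 1}, the rank-one stochastic matrices $\mathbf{1}e_j^T$ for each $j\in\{1,2,3,4\}$ (e.g. mutual cooperation gives $\mathbf{1}e_1^T$, mutual defection $\mathbf{1}e_4^T$), whose unique invariant distribution is the point mass $e_j$. Equivalently one may argue directly at the matrix level: $\mathbf{O}(\mathbf{1}e_j^T)\mathbf{O}^T=(\mathbf{O}\mathbf{1})(\mathbf{O}e_j)^T$ must be nonnegative with rows summing to one. Imposing the row-sum condition forces $\mathbf{O}\mathbf{1}=\lambda\mathbf{1}$ with $\lambda=\pm1$ (orthogonality fixes $|\lambda|=1$), and then the $(k,l)$ entry $\lambda(\mathbf{O}e_j)_l\ge 0$ forces $\lambda\,\mathbf{O}\ge0$, so $\lambda\mathbf{O}$ is a permutation matrix by (B); the sign is pinned to $\lambda=+1$ because $\mathbf{O}e_1=\lambda Pe_1$ must itself be a probability vector.

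The main obstacle is extending (A) to memory $N\ge 2$. There the one-step successor of a state retains the most recent $N-1$ rounds of history, so the deterministic transition matrices are no longer rank one and only the four constant histories are fixed points; hence only four vertices of the simplex arise as point-mass invariant distributions, and the short argument above yields nonnegativity of only four columns of $\mathbf{O}$. To close the gap I would exploit that the realizable transition matrices form a full-dimensional family inside the affine space $\{\mathbf{M}:\mathbf{M}\mathbf{1}=\mathbf{1}\}$, so that conjugation by $\mathbf{O}$ carries an open set of stochastic matrices into the nonnegative cone; combined with the group structure of the admissible matrices (so that $\mathbf{O}^{-1}=\mathbf{O}^T$ is admissible as well), this should force the map $\Phi_{\mathbf{O}}(\cdot)=\mathbf{O}(\cdot)\mathbf{O}^T$ to preserve the whole nonnegative cone, which is exactly the statement $\mathbf{O}\ge0$. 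Alternatively, the recursive block structure of Lemma~\ref{lem: construction matrix} should allow a reduction of the general-$N$ case to the memory-$1$ computation. Establishing this nonnegativity rigorously --- rather than the final classical step (B) --- is where the real work lies; the arguments are otherwise parallel to those of the simultaneous setting.
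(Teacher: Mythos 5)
Your concluding step (B) is correct and classical, and your memory-$1$ treatment of (A) is essentially sound: the four constant deterministic strategy pairs do realize the matrices $\mathbf{1}e_j^T$ via \eqref{eq: transition memory 1}, and you are right that conjugation alone cannot detect the sign of $\mathbf{O}$ (conjugating by $-P$ preserves nonnegativity just as well as conjugating by $P$), so invoking the $\nu\mapsto\mathbf{O}\nu$ component of \eqref{Eq:OTransformation} to pin $\lambda=+1$ is both legitimate and necessary. For comparison, the paper supplies no argument at all: it asserts the lemma ``immediately follows'' from the demand that $\mathbf{O}\Mb_N\mathbf{O}^T$ again be an entrywise-nonnegative, row-stochastic game matrix, and in the proof of the theorem that follows it reduces admissibility for alternating games to admissibility for simultaneous games, whose classification it imports from \cite{balabanova2024adaptive}. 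Your $N=1$ computation is therefore a more explicit version of an argument the paper leaves implicit.

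The genuine gap is your extension to $N\ge 2$, and the specific premise you propose to lean on there is false: the realizable transition matrices are nowhere near full-dimensional in the affine space $\{\mathbf{M}:\mathbf{M}\mathbf{1}=\mathbf{1}\}$. By Lemma~\ref{lem: construction matrix}, a memory-$N$ game matrix of the form \eqref{eq: memory N matrix} has at most four nonzero entries per row, in prescribed positions, and depends on only $2\cdot 2^{2N}$ parameters, while the ambient affine space has dimension $2^{2N}(2^{2N}-1)$; even at $N=1$ this is $8$ parameters against $12$ dimensions. So conjugation by $\mathbf{O}$ is constrained only on a thin, highly structured set, no open set of stochastic matrices is available, and the proposed cone-preservation argument cannot get started (and, as you half-acknowledge, full cone preservation would in any case only yield $\pm\mathbf{O}\ge 0$, with the sign again settled by the $\nu$-transformation). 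Your point-mass route stalls for exactly the reason you identify: for $N\ge 2$ only the four constant histories are fixed states, so only four columns of $\mathbf{O}$ get pinned. To close the argument you must exploit the structure, not the volume, of the realizable family --- for instance your own alternative suggestion of an induction on $N$ through the block form \eqref{eq: memory N matrix}, or the paper's implicit route of showing that every alternating-admissible $\mathbf{O}$ is admissible in the simultaneous sense (the nonzero supports of the two families of matrices coincide; only the $\q$-labelling differs) and then quoting the classification of \cite{balabanova2024adaptive}. As written, your proposal proves the lemma only for $N=1$.
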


Clearly, we can not realise the change of the leading player through the means of matrix multiplication, as we did for simultaneous games. The matrices will have to be permutation matrices for the same reasons. But for every $\p_i$ in each line we have two different $\q_j$s, and for every $\q_j$ the two $\p_i$s are in different rows. Therefore, no exchange can be done via the means of permutation.

\subsubsection{$\mathbb{Z}_2$ symmetries}
\begin{lemma}
\label{st: matrices memory 1}
The following matrices are admissible (in the sense of CITE) for the memory 1 alternating game.  
\begin{equation}
        \label{eq: good J for memory 1}
        \begin{split}
 &  J^1_1 := \left(
\begin{array}{cccc}
 1 & 0 & 0 & 0 \\
 0 & 1 & 0 & 0 \\
 0 & 0 & 1 & 0 \\
 0 & 0 & 0 & 1 \\
\end{array}
\right), \ J^2_1:=\left(
\begin{array}{cccc}
 0 & 1 & 0 & 0 \\
 1 & 0 & 0 & 0 \\
 0 & 0 & 0 & 1 \\
 0 & 0 & 1 & 0 \\
\end{array}
\right), \ J^3_1:=\left(
\begin{array}{cccc}
 0 & 0 & 1 & 0 \\
 0 & 0 & 0 & 1 \\
 1 & 0 & 0 & 0 \\
 0 & 1 & 0 & 0 \\
\end{array}
\right), \\& \ J^4_1:=
\left(
\begin{array}{cccc}
 0 & 0 & 0 & 1 \\
 0 & 0 & 1 & 0 \\
 0 & 1 & 0 & 0 \\
 1 & 0 & 0 & 0 \\
\end{array}
\right).
        \end{split}
    \end{equation}
    \end{lemma}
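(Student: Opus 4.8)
The plan is to exploit the fact that all four matrices are permutation matrices and that, read as permutations of the state space $\{CC,CD,DC,DD\}$, they encode the natural relabellings of cooperation and defection. First I would record that $J^1_1,\dots,J^4_1$ are visibly permutation matrices, hence orthogonal, and that they are closed under multiplication, forming a copy of the Klein four-group $\mathbb{Z}_2\times\mathbb{Z}_2$; by the classification result above (admissible orthogonal matrices must be permutation matrices) these are precisely the candidates one has to test. Reading the states in lexicographic order, $J^2_1$ swaps the second symbol of each state, $J^3_1$ swaps the first symbol, and $J^4_1=J^2_1 J^3_1$ swaps both. Since in the memory-$1$ state $i_1 i_2$ the first symbol is the last action of $\p$ and the second is the last action of $\q$, these correspond respectively to relabelling $C\leftrightarrow D$ for $\q$, for $\p$, and for both players.

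Next I would produce, for each nontrivial $J$, an explicit transformed pair $(\p',\q')$ and claim $J\,\Mb_1(\p,\q)\,J^T=\Mb_1(\p',\q')$. Interpreting the relabelling directly on the conditional cooperation probabilities gives, for $J^3_1$ (relabel $\p$),
\[
p'_{i_1 i_2}=1-p_{\bar i_1 i_2},\qquad q'_{j_1 j_2}=q_{j_1\bar j_2},
\]
and, for $J^2_1$ (relabel $\q$),
\[
p'_{i_1 i_2}=p_{i_1\bar i_2},\qquad q'_{j_1 j_2}=1-q_{\bar j_1 j_2},
\]
where $\bar C=D$ and $\bar D=C$; the rule for $J^4_1$ is the composition of the two. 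The point to stress is that each $(\p',\q')$ still lies in $[0,1]^4\times[0,1]^4$, so $\Mb_1(\p',\q')$ is genuinely the transition matrix of an alternating memory-$1$ game, which is exactly what admissibility demands.

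Since $J^1_1$ is the identity and $J^4_1=J^2_1 J^3_1$, it suffices to verify the two generators $J^2_1$ and $J^3_1$; admissibility of $J^4_1$ then follows because a composition of admissible transformations is again of game type. For each generator the verification reduces to the bookkeeping identity $\big(J\,\Mb_1\,J^T\big)_{kl}=\big(\Mb_1\big)_{\sigma^{-1}(k)\,\sigma^{-1}(l)}$, into which I substitute the explicit entries of \eqref{eq: transition memory 1} and check entry by entry that the result is the corresponding entry of $\Mb_1(\p',\q')$.

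The hard part will be the $\q$-bookkeeping flagged in the Remark: unlike the simultaneous case, a $\p$-index and a $\q$-index no longer carry the same history, because $\q$ has forgotten $i_1$ and instead appends the freshly observed action of $\p$. Consequently, under conjugation I must confirm that an entry of the structural form $p_{i_1\underline i}\,q_{\underline i C}$ is carried to another entry of exactly this type --- in particular that the complementations $q\mapsto 1-q$ (for $J^2_1$) land on the $\q$-component whose trailing index is still the current action of $\p$, and that the four-diagonal, column-wise ``$C$ in columns $1,2$ / $D$ in columns $3,4$'' pattern survives the permutation. Checking that the induced index shift on the $\q$-entries is compatible with the row/column permutation $\sigma$ is the only genuinely non-mechanical step; once it is established for the two generators, the lemma follows.
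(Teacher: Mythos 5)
Your proposal is correct and takes essentially the same approach as the paper: the paper likewise conjugates $\Mb_1(\p,\q)$ by the generators $J_1^2$ and $J_1^3$, reads off that the result is $\Mb_1(\p',\q')$ for the cooperation/defection-relabelled strategies (your substitution rules $p'_{i_1i_2}=p_{i_1\bar i_2}$, $q'_{j_1j_2}=1-q_{\bar j_1 j_2}$ for $J_1^2$, and $p'_{i_1i_2}=1-p_{\bar i_1 i_2}$, $q'_{j_1j_2}=q_{j_1\bar j_2}$ for $J_1^3$, match the paper's explicitly computed conjugated matrices), and then handles $J_1^4$ via $J_1^4=J_1^2J_1^3$ exactly as you do. The only difference is organisational --- the paper computes the conjugations first and reads off the substitutions, whereas you posit the relabelled pair $(\p',\q')$ and verify entrywise --- and your explicit index bookkeeping is, if anything, stated more carefully than the paper's verbal summary.
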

    \begin{proof}
        This statement can be checked by an explicit computation. The transition matrix for memory 1 has the form
        \[
        \Mb_1(\p,\q) = \begin{pmatrix}
            p_1 q_1& p_1(1-q_1)& (1-p_1)q_2& (1-p_1)(1-q_2)\\
             p_2 q_3& p_2(1-q_3)& (1-p_2)q_2& (1-p_1)(1-q_2)\\
              p_3 q_1& p_3(1-q_1)& (1-p_3)q_2& (1-p_3)(1-q_2)\\
                p_4 q_3& p_4(1-q_3)& (1-p_4)q_2& (1-p_4)(1-q_2)\\
        \end{pmatrix}
        \]
       We omit the transposition in what follows, since all the matrices $J_1^i$ are invariant with regards to it. 
        
    Then the conjugation yields the following  results:
        \[
        J_1^2\Mb_1J_1^2 = \left(
\begin{array}{cccc}
 p_2 (1-\text{q3}) & p_2 \text{q3} & (1-p_2) (1-\text{q4}) & (1-p_2) \text{q4} \\
 \text{p1} (1-\text{q1}) & \text{p1} \text{q1} & (1-\text{p1}) (1-\text{q2}) & (1-\text{p1}) \text{q2} \\
 p_4 (1-\text{q3}) & p_4 \text{q3} & (1-p_4) (1-\text{q4}) & (1-p_4) \text{q4} \\
 \text{p3} (1-\text{q1}) & \text{p3} \text{q1} & (1-\text{p3}) (1-\text{q2}) & (1-\text{p3}) \text{q2} \\
\end{array}
\right),
        \]
        meaning that $J_1^2$  exchanges $p_{CD}$ with  $p_{CC}$, $p_{DC}$ with $p_{DD}$, $q_{CC}$ with $1-q_{CD}$ and $q_{CD}$ with $1-q_{DD}$. As a symmetry of the game, it exchanges the notions of cooperation and defection for the  $\q$-player. 
\[
        J_2^2\Mb_1J_2^2 = 
\left(
\begin{array}{cccc}
 (1-\text{p3}) \text{q2} & (1-\text{p3}) (1-\text{q2}) & \text{p3} \text{q1} & \text{p3} (1-\text{q1}) \\
 (1-p_4) \text{q4} & (1-p_4) (1-\text{q4}) & p_4 \text{q3} & p_4 (1-\text{q3}) \\
 (1-\text{p1}) \text{q2} & (1-\text{p1}) (1-\text{q2}) & \text{p1} \text{q1} & \text{p1} (1-\text{q1}) \\
 (1-p_2) \text{q4} & (1-p_2) (1-\text{q4}) & p_2 \text{q3} & p_2 (1-\text{q3}) \\
\end{array}
\right).        \]
        Here, $p_1$ is substituted by $1-p_3, $ $p_2$ by $1-p_4$, $q_1$ by $q_2$ and $q_3$ by $q_4$. This signals the  exchange of notions of cooperation and defection for the $\p$-player. 
        
 Lastly, we observe that $J_1^2J_1^3 = J_1^3J_1^2 = J_1^4$-- this entails that conjugation by the matrix $J_1^4$ exchanges cooperation and defection for both players.
    \end{proof}
   
    Consider the recursively constructed matrices as before: let $J^i_1$ be the matrices from Lemma \ref{st: matrices memory 1}, and at $N$th step  in the iteration we put a $2^{2(N-1)}$ matrix containing of $0$s instead of each $0$ and the matrix $J^i_{N-1}$ instead of a 1 in $J^i_{N-1}$.

Then following holds:
\begin{theorem}
    The only admissible matrices for memory $N$-alternating games are $J^1_N$, $J^2_N,  J^3_N$ and $J^4_n$. 
\end{theorem}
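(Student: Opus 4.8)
The plan is to prove both inclusions by induction on $N$, exploiting the recursive block structure of $\Mb_N$ from Lemma \ref{lem: construction matrix} together with the parallel recursive construction of the $J^i_N$. Since we already know every admissible matrix must be a permutation matrix, the task splits into a \emph{sufficiency} part (each $J^i_N$ is admissible) and a \emph{necessity} part (no other permutation matrix is). The base case $N=1$ supplies the four admissible matrices via Lemma \ref{st: matrices memory 1}; for the base case of necessity one checks directly that, among the finitely many $4\times4$ permutation matrices, conjugation of the explicit memory-$1$ matrix $\Mb_1(\p,\q)$ preserves the transition-matrix template only for $J^1_1,\dots,J^4_1$. This is a finite verification closing the base case in both directions.

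For sufficiency in the inductive step, I would observe that $J^i_N$ inherits a Kronecker-type block form from its recursive definition, so that conjugation by $J^i_N$ acts on $\Mb_N$ as the base permutation $J^i_1$ on the outer $4\times4$ block layout and as $J^i_{N-1}$ on each inner block. Because the block layout of $\Mb_N$ in \eqref{eq: memory N matrix} is exactly the one produced by appending the front indices $CC,CD,DC,DD$, the outer action $J^i_1$ only relabels these appended indices while the inner action $J^i_{N-1}$ conjugates the memory-$(N-1)$ blocks. By the inductive hypothesis $J^i_{N-1}\Mb_{N-1}(J^i_{N-1})^{T}$ is again a valid transition matrix, and one checks that the two relabellings combine consistently: the $C\!\leftrightarrow\!D$ swaps for $\p$ and for $\q$ propagate coherently through the appended indices, so that $J^i_N\Mb_N(J^i_N)^{T}$ is the transition matrix of the strategies obtained by the corresponding swap. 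Hence each $J^i_N$ is admissible.

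The necessity direction is the heart of the argument. For generic $\p,\q$ (all coordinates in $(0,1)$) the nonzero pattern of $\Mb_N$ is exactly the shift adjacency described before Lemma \ref{lem: construction matrix}: each state receives mass from precisely four predecessors, obtained by dropping the oldest round and prepending one of $CC,CD,DC,DD$. Every valid transition matrix has maximal support equal to this de Bruijn adjacency, and conjugation by a permutation preserves the number of nonzeros, so an admissible $\mathbf{O}$ must be an automorphism of this directed graph. I would then impose the multiplicative template of the entries: in each row the four nonzero entries must factor as $p'(\,\cdot\,),\,p'(\,\cdot\,),\,(1-p')(\,\cdot\,),\,(1-p')(\,\cdot\,)$ with a common $p$-factor, while the $q$-factors pair up as $q',\,1-q'$ within the two columns sharing the same prepended action. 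Matching this template after conjugation forces $\mathbf{O}$ to respect the factorisation of the state index into its $\p$-history and $\q$-history coordinates, and in particular to be compatible with the outer/inner block splitting of \eqref{eq: memory N matrix}. This compatibility lets me restrict $\mathbf{O}$ to an admissible permutation of the memory-$(N-1)$ problem, which by the inductive hypothesis is one of $J^1_{N-1},\dots,J^4_{N-1}$; the residual freedom in the outer block action is then pinned down by the same template to the matching $J^i_1$, yielding $\mathbf{O}=J^i_N$.

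The main obstacle I anticipate is precisely this step of converting the entry-factorisation constraints into genuine block compatibility. The difficulty is the structural asymmetry flagged in the Remark after Lemma \ref{lem: construction matrix}: the $\q$-elements recur twice as fast as the $\p$-elements, so a bare automorphism of the shift graph need not respect the $\p$/$\q$ roles, and one must use the template to exclude the spurious automorphisms, above all the focal-player exchange, which is a symmetry of the adjacency but, as already noted, cannot be realised once the $p_i q_j$ factorisation is taken into account. Making this exclusion rigorous, namely showing that the common-$p$-factor condition along rows together with the paired-$q$-factor condition along columns leaves exactly the Klein four-group of $C\!\leftrightarrow\!D$ relabellings, is the technical core; I would carry it out by tracking how a putative admissible $\mathbf{O}$ must act on the leading index block $i_1 i_2$ and then propagating the constraint inward through the recursion.
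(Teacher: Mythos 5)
Your route is genuinely different from the paper's. The paper disposes of the theorem in two sentences by a containment argument: since admissibility is governed by the quadruple (four-diagonal) nonzero pattern, which the alternating transition matrix shares with the simultaneous one, any alternating-admissible permutation must already belong to the finite, previously classified group of admissible matrices for the \emph{simultaneous} memory-$N$ game from \cite{balabanova2024adaptive}; it then inspects that short list, discards the focal-player exchanges and the block exchanges incompatible with the alternating placement of the $\q$-entries, and is left with exactly $J^1_N,\dots,J^4_N$. You instead build everything from scratch by induction on $N$: a finite check at $N=1$ (which essentially re-derives Lemma \ref{st: matrices memory 1} plus its converse), a Kronecker-type sufficiency argument consistent with the recursive definition of $J^i_N$ and the block form \eqref{eq: memory N matrix}, and a necessity argument via automorphisms of the shift (de Bruijn) adjacency constrained by the $p_iq_j$ entry factorisation. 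Your version is more self-contained than the paper's --- in particular, you actually argue sufficiency of the $J^i_N$ for $N\ge 2$, which the paper takes for granted --- and your diagnosis of why the focal-player swap fails (each $\p$-entry pairs with two $\q$-entries sitting in different rows, so no permutation realises the exchange) is exactly the paper's observation in Section \ref{sec:symmetries setup}.

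However, as written your necessity direction is a plan rather than a proof: the decisive step --- showing that the common-$p$-factor condition along rows and the paired-$q$-factor condition along columns cut the automorphism group of the shift graph down to the Klein four-group, and that this forces compatibility with the outer/inner block splitting so that the induction can descend to memory $N-1$ --- is explicitly deferred (``the technical core; I would carry it out by\dots''). This matters because the shift-graph automorphism group is strictly larger than the answer (it contains the focal exchange, among other relabellings), and because an admissible $\mathbf{O}$ is not a priori a Kronecker product, so block compatibility is precisely what must be established, not assumed. You can close this gap cheaply by importing the paper's containment observation: conjugation by a permutation preserves the quadruple nonzero pattern, hence any alternating-admissible $\mathbf{O}$ is already admissible for the simultaneous game, an explicitly known finite list; your factorisation-template analysis then only has to exclude the finitely many candidates involving the focal swap, reducing the ``technical core'' from a statement about the full symmetric group to the same kind of finite verification you already perform in your base case.
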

\begin{proof}
    Clearly, the matrices that are admissible need to be contained within the set of admissible matrices for simultaneous games, since we were primarily concerned with the form of the matrix when we constructed them. 

    However, any of the matrices that change the focal player are clearly not admissible; additionally, we can only exchange 1st and 2nd blocks, as well as 3rd and 4rth; this excludes any of the admissible matrices but the ones listed in the statement of the theorem. 
\end{proof}
\subsection{Symmetries of the dynamics}
\label{sec: symmetries dynamics}
In this section we briefly discuss some more general properties of the adaptive dynamics: namely, the same $\mathbb{Z}_2$-symmetry that the simultaneous dynamics possess. 

In \cite{balabanova2024adaptive}, the connection was spelled out between  the vectors $\mathbf{f}_N$, $\mathbf{1}_N$ and $J_N^4\mathbf{f}_N$ for games with equal gains from switching. 

By the very construction in Section CITE, every alternating game yields a payoff vector with equal gains from switching. Therefore, the following identity holds in our case as well:
\begin{lemma}
    \label{st: multiplication payoff by J}
    \begin{equation}
        -\mathbf{f}_N + C_N\mathbf{1}_N = \mathbf{J}_N^8\mathbf{f}_N,
    \end{equation}
    where $C_N$ is a constant depending on $N$.
\end{lemma}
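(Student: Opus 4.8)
The plan is to prove the identity by induction on $N$, playing the recursive description of $\mathbf{f}_N$ from Lemma~\ref{st: recursive payoff vector} against the recursive description of the full-flip matrix $J_N^4$ (which I take to be the matrix written $\mathbf{J}_N^8$ in the statement, i.e. the admissible involution that exchanges $C$ and $D$ for both players, since $\mathbf{J}_N^8$ is not otherwise defined and $J_N^4$ is exactly the both-player flip identified before the theorem). The conceptual reason the identity should hold is that conjugation by $J_N^4$ realises the simultaneous relabelling $C\leftrightarrow D$ for both players; on a single round this swaps the profiles $(C,C)\leftrightarrow(D,D)$ and $(C,D)\leftrightarrow(D,C)$, hence swaps the per-round payoffs $R\leftrightarrow P$ and $S\leftrightarrow T$. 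The equal-gains-from-switching relation $R+P=S+T$, which holds automatically for every alternating game by the construction around Lemma~\ref{st: payoff vector correctly defined}, then says each single-round payoff $x$ is sent to $(R+P)-x$. Averaging over the $N$ remembered rounds should give exactly $J_N^4\mathbf{f}_N=(R+P)\mathbf{1}_N-\mathbf{f}_N$, so I expect $C_N=R+P$ (equivalently $S+T$), in particular independent of $N$ once the $1/N$ normalisation of Lemma~\ref{st: recursive payoff vector} is included.

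To make this rigorous I would first record two elementary facts. Since $J_N^4$ is a permutation matrix it fixes the all-ones vector, $J_N^4\mathbf{1}_N=\mathbf{1}_N$. Second, the recursive construction of $J_N^i$ stated before the theorem means that $J_N^4$ is block anti-diagonal with the $(N\!-\!1)$-level block on the anti-diagonal:
\begin{equation*}
J_N^4=\begin{pmatrix} 0&0&0&J_{N-1}^4\\ 0&0&J_{N-1}^4&0\\ 0&J_{N-1}^4&0&0\\ J_{N-1}^4&0&0&0\end{pmatrix}.
\end{equation*}
I would prove the claim first for the unnormalised vector $\widetilde{\mathbf{f}}_N$ in the form $J_N^4\widetilde{\mathbf{f}}_N=-\widetilde{\mathbf{f}}_N+\widetilde{C}_N\mathbf{1}_N$ and divide by $N$ at the end. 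The base case $N=1$ is the direct check $J_1^4(R,S,T,P)^{\mathsf T}=(P,T,S,R)^{\mathsf T}=(R+P)\mathbf{1}_1-(R,S,T,P)^{\mathsf T}$, where the two matching conditions $C_1=R+P$ and $C_1=S+T$ are reconciled exactly by equal gains from switching; this gives $\widetilde{C}_1=R+P$.

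For the inductive step I would substitute the block form of $\widetilde{\mathbf{f}}_N$ from \eqref{eq: recursive noq quite payoff} and apply the block anti-diagonal $J_N^4$. The anti-diagonal arrangement pairs the first block (carrying $+R\mathbf{1}_{N-1}$) with the fourth ($+P\mathbf{1}_{N-1}$) and the second ($+S$) with the third ($+T$). Using $J_{N-1}^4\mathbf{1}_{N-1}=\mathbf{1}_{N-1}$ together with the inductive hypothesis $J_{N-1}^4\widetilde{\mathbf{f}}_{N-1}=-\widetilde{\mathbf{f}}_{N-1}+\widetilde{C}_{N-1}\mathbf{1}_{N-1}$, the first output block becomes $-\widetilde{\mathbf{f}}_{N-1}+(\widetilde{C}_{N-1}+P)\mathbf{1}_{N-1}$, which must equal $-(\widetilde{\mathbf{f}}_{N-1}+R\mathbf{1}_{N-1})+\widetilde{C}_N\mathbf{1}_{N-1}$; this forces $\widetilde{C}_N=\widetilde{C}_{N-1}+R+P$. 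Carrying out the same comparison for the remaining three blocks produces the parallel requirement $\widetilde{C}_N=\widetilde{C}_{N-1}+S+T$, and the two are compatible precisely because of equal gains from switching. Hence $\widetilde{C}_N=N(R+P)$, and dividing by $N$ yields $J_N^4\mathbf{f}_N=-\mathbf{f}_N+(R+P)\mathbf{1}_N$.

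The step I expect to be the only real obstacle is bookkeeping: confirming that the recursively built $J_N^4$ really is block anti-diagonal with exactly $J_{N-1}^4$ on each anti-diagonal block (the construction in the text is stated tersely), and then checking that all four block comparisons in the inductive step yield a single consistent value of $\widetilde{C}_N$. Everything else is linear algebra that reduces to the identities $J_{N-1}^4\mathbf{1}_{N-1}=\mathbf{1}_{N-1}$ and $R+P=S+T$; once the block structure is pinned down, the induction closes immediately and identifies the constant as $C_N=R+P$.
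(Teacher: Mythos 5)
Your proof is correct, and it is worth noting that the paper itself does not actually prove this lemma: it defers entirely to the proof in \cite{balabanova2024adaptive}, remarking only that every alternating game automatically has equal gains from switching. Your self-contained induction therefore fills a gap rather than duplicating an argument, and it is of exactly the kind the citation gestures at: you play the block recursion for $\widetilde{\mathbf{f}}_N$ from Lemma~\ref{st: recursive payoff vector} against the block anti-diagonal recursion for the both-player flip, closing each of the four block comparisons with $J_{N-1}^4\mathbf{1}_{N-1}=\mathbf{1}_{N-1}$ and the inductive hypothesis. Your identification of the undefined symbol $\mathbf{J}_N^8$ (notation imported from the simultaneous-game paper) with $J_N^4$, the full anti-diagonal reversal, is the right reading --- that is precisely the role $\mathbf{J}_N^8$ plays in the symmetry theorem of Section~\ref{sec: symmetries dynamics}, where multiplication by it from the left and right reverses rows and columns. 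Two substantive points in your write-up deserve to be recorded. First, the consistency of your four block equations --- and already of the base case, where components $1,4$ force $\widetilde{C}_1=R+P$ while components $2,3$ force $\widetilde{C}_1=S+T$ --- hinges on the identity $R+P=S+T$; with the paper's definitions $R=a+b$, $S=a+d$, $T=b+c$, $P=c+d$ this holds identically, and you use the correct form. The relation actually displayed in Section~\ref{sec:model}, $R+T=S+P$, is a typo (it would force $b=d$), so your proof silently corrects the paper on the one point where the whole lemma could fail. Second, your conclusion is sharper than the statement: after the $1/N$ normalisation you obtain $C_N=R+P$, a constant that in fact does not depend on $N$ at all.
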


For brevity we do not provide the detailed proof here, since it is identical to the one contained in \cite{balabanova2024adaptive}-- which we refer the reader to. 

Introduce a transformation $\varphi:[0,1]^{2^{2n}}\rightarrow[0,1]^{2^{2n}}$, defined by
$$\varphi\big( \p \big) = \varphi\big( (p_{C\ldots C},\ldots,p_{D\ldots D})\big) = (1-p_{D\ldots D},\ldots,1\!-\!p_{C\ldots C}).$$. Clearly, this is the exchange of notions of winning or losing -- performing the same function as the matrix $J_N^3$ from Section CITE does (or $J_N^2$, if we consider $\varphi(\q)$).

\begin{theorem}
If $(p_{CC\ldots C}(t),\ldots, p_{DD\ldots D}(t))$ is a trajectory of the adaptive dynamics for a given alternating donation game, then so is $(1- p_{DD\ldots D}(-t),\ldots, 1-p_{CC\ldots C}(t))$.  
\end{theorem}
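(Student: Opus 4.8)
The plan is to show that the time-reversed, $\varphi$-transformed curve satisfies the same adaptive-dynamics ODE \eqref{eq: adaptive dynamics} by exploiting the $\mathbb{Z}_2$-symmetry of the payoff function together with the identity in Lemma \ref{st: multiplication payoff by J}. The key structural facts I would assemble first are: (i) by Lemma \ref{st: lemma classification}, conjugating the triple $(\nu_N,\mathbf{M}_N,\mathbf{f}_N)$ by the admissible permutation matrix $J_N^3$ leaves the payoff function $A(\p,\q)$ invariant; (ii) the map $\varphi$ realises exactly this conjugation at the level of strategies, i.e. $\varphi(\p)$ corresponds to the relabelling of $\p$ effected by $J_N^3$ (as noted right before the statement); and (iii) Lemma \ref{st: multiplication payoff by J} tells us that the payoff vector transforms under $J_N^4$ as $\mathbf{f}_N \mapsto -\mathbf{f}_N + C_N\mathbf{1}_N$. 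Since the additive multiple of $\mathbf{1}_N$ contributes only a constant to $A$ (because $\langle \nu_N,\mathbf{1}_N\rangle = 1$ for the invariant distribution), the net effect of the full cooperation/defection swap is to send $A \mapsto -A + C_N$.

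First I would translate these facts into a functional equation for $A$. Writing the combined swap-both-players symmetry, I expect to obtain
\begin{equation}
\label{eq: A functional equation}
A\big(\varphi(\p),\varphi(\q)\big) = -A(\p,\q) + C_N.
\end{equation}
Then I would differentiate this relation with respect to $\p$, restricting to $\p=\q$ at the end, to see how the adaptive vector field transforms under $\varphi$. The chain rule applied to \eqref{eq: A functional equation} introduces the Jacobian of $\varphi$, which is the antidiagonal permutation (reversal of coordinate order) times $(-1)$ from the $1-(\cdot)$ in each slot; call this linear map $D\varphi = -J_N^3$. The overall sign from the $-A$ on the right, combined with the sign from $D\varphi$, is precisely what must conspire to produce the time reversal.

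Next I would set $\mathbf{u}(t) = \varphi\big(\p(-t)\big)$ and compute $\dot{\mathbf{u}}(t) = -D\varphi\,\dot{\p}(-t)$, using the assumption that $\p(t)$ solves \eqref{eq: adaptive dynamics}, so that $\dot{\p}(-t) = \partial_\p A(\p,\q)\big|_{\p=\q}$ evaluated at $\p(-t)$. Substituting the differentiated form of \eqref{eq: A functional equation} should show that $-D\varphi\,\partial_\p A\big|_{\p=\q}$, evaluated at $\p(-t)$, equals $\partial_\p A\big|_{\p=\q}$ evaluated at $\varphi(\p(-t)) = \mathbf{u}(t)$, which is exactly the adaptive-dynamics field at $\mathbf{u}(t)$. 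This establishes that $\mathbf{u}(t)$ is itself a trajectory, which is the claim.

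The main obstacle I anticipate is bookkeeping the two sign sources correctly and confirming that they cancel to give a genuine \emph{time reversal} rather than merely a re-parametrisation: concretely, verifying that the factor $-1$ coming from $D\varphi$ and the factor $-1$ coming from the $-A$ in \eqref{eq: A functional equation} combine with the $-t$ in the argument so that $\mathbf{u}$ satisfies the forward ODE. A subtle point requiring care is that $\varphi$ reverses the coordinate ordering, so $\partial A/\partial p_i$ at $\p$ must be matched against $\partial A/\partial p_{\sigma(i)}$ at $\varphi(\p)$ under the reversal permutation $\sigma$; I would handle this by working with the identity \eqref{eq: A functional equation} abstractly via the chain rule rather than componentwise. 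I would also double-check that evaluating at the diagonal $\p=\q$ is compatible with $\varphi$ acting on both arguments, i.e. that $\varphi(\p)=\varphi(\q)$ whenever $\p=\q$, which is immediate, so the diagonal restriction commutes with the symmetry.
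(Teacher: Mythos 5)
Your overall route is the paper's route: combine the cooperation/defection swap for both players with Lemma \ref{st: multiplication payoff by J}, push the derivative through the chain rule, and let the two minus signs absorb the time reversal. Your scalar functional equation $A(\varphi(\p),\varphi(\q)) = -A(\p,\q) + C_N$ is exactly the integrated form of the paper's computation \eqref{eq: reversed everything}, and your sign bookkeeping (Jacobian $D\varphi = -J$, $\dot{\mathbf{u}}(t) = -D\varphi\,\dot{\p}(-t)$, the constant killed by $\langle\nu_N,\mathbf{1}_N\rangle = 1$) is correct. However, there is a genuine gap at your step (ii). Everything hinges on the identity $\nu_{\varphi(\p),\varphi(\q)} = J\,\nu_{\p,\q}$, equivalently $\Mb_N(\varphi(\p),\varphi(\q)) = J\,\Mb_N(\p,\q)\,J$ with $J$ the full antidiagonal reversal, and you take this as given, citing the remark preceding the theorem. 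That remark is an interpretive aside, not a proof; Lemma \ref{st: matrices memory 1} only verifies the conjugation identities by explicit computation for $N=1$, and Lemma \ref{st: lemma classification} says nothing about \emph{which} relabelling of $(\p,\q)$ a given conjugation induces. The paper's proof of this theorem spends its entire first half establishing precisely this fact: it identifies conjugation by the reversal with the double (skew-)transposition $(\Mb_N^T)^{\tau}$, and then uses the specific alternating structure — each row indexed by a single $\p$-entry while each $\q$-entry appears twice per block, the four-diagonal quadruple structure preserved under $m_{ij}\mapsto m_{n-i,\,n-j}$ with the order inside each quadruple reversed — to conclude the reversed matrix is again an alternating transition matrix with arguments $\varphi(\p),\varphi(\q)$. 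This is the only alternating-game-specific content of the theorem (the analogous simultaneous statement does not transfer automatically, since the pairing of $\p$- and $\q$-indices is broken here), so omitting it leaves the proof incomplete.

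A secondary point: you label the conjugating matrix $J_N^3$, but $J_N^3$ only swaps the two half-blocks and realises the $C/D$ flip for the leading player alone; the Jacobian of $\varphi$ is $-J_N^4$, the recursively built antidiagonal reversal (which the paper, with inherited notation, calls $\mathbf{J}_N^8$ inside this proof). Lemma \ref{st: multiplication payoff by J} is stated for that same reversal matrix, so the functional equation only closes if one single matrix is used consistently in the conjugation of $\Mb_N$, the transformation of $\mathbf{f}_N$, and the Jacobian of $\varphi$. Once step (ii) is proved with the reversal matrix, the remainder of your argument goes through as written and coincides with the paper's.
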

\begin{proof}
First, we demonstrate that the following four actions are equivalent:
\begin{enumerate}
  \item  $\Mb_N(\p,\q)\mapsto \Mb_N(\varphi(\p),\varphi(\q))$
    \item $\Mb_N(\p,\q)\mapsto J_N^8\Mb_N(\p,\q)J_N^8$
    \item $\Mb_N(\p,\q)\mapsto (\Mb(\p,\q)^T)^{\tau}$
\end{enumerate}
Here, the operation $T$ is the standard matrix operation of transposition, and $\tau$ is the transposition with respect to the skew diagonal. 

The equivalence of the second and third statements is fairly obvious: multiplying by $J_N^8$ from the left reverses the order of the rows, and multiplying by $J_N^8$ from the right reverses the order of the columns. This can easily be seen to be equivalent to the combination of the two transpositions.

Therefore, to complete our proof, we need the equivalence of, say, the first and the third statement. Compared to simultaneous games, this is a simpler case. Every row of $\Mb_N(\p,\q)$  is indexed by its own element of $\p$. Imagine that we are moving along a matrix from left to right, top to bottom, jumping to the subsequent row as we finish tracin the previous one. 

Then this way, we ``go over" the $\p$-vector once in lexicographic order and over the $\q$-vector also twice, in lexicographic order. Applying the two transpositions to the matrix is equivalent to tracing the rows and columns in the opposite direction, and evidently yields the matrix of the same form, only dependent on $\p$ and $\q$ written in the reverse order. 

This proof is completed by observing two things:
\begin{enumerate}
    \item The four-diagonal structure of the matrix is preserved. An element $m_{i\ j}$ of $\Mb_N(\p,\q)$ is mapped to the element $m_{n-i \ n-j}$, and the quadruples that were together stay together. 
    \item the order of the elements in the quadruples is reversed -- this guarantees that the double transposed matrix $\Mb$ is the function of $\varphi(\p)$, $\varphi(\q)$ as opposed to just multiplicands of $\p$ and $\q$. 
\end{enumerate}
Since we have shown that 1 and 3 are equivalent, all three are equivalent. 

The rest of the proof is identical to that in CITE. Introducing the new payoff vector
\[
g(\mathbf{f}_N):= -J_N^8\mathbf{f}_n =K_N\mathbf{1}_N + \mathbf{f}_N 
\]
for some constant $K_N:=K_N(N)$ and the new time
\[
\kappa:=-t, 
\]
we can write for $A(\varphi(\p),\varphi(\q),g(\mathbf{f}_N)$ and $\tau$:
\begin{equation}
    \label{eq: reversed everything}
    \begin{split}
    \frac{\partial A(\varphi(\p),\varphi(\q), \mathbf{f}_N) }{\partial\varphi(\p)}\Big|_{\varphi(\p) =\varphi(\q)} &= \frac{\partial \left<\nu_{\varphi(\p), \varphi(\q)},\mathbf{f}_N\right>}{\partial\varphi(\p)}\Big|_{\varphi(\p) =\varphi(\q)} \\&=\frac{\partial\left<\mathbf{J}_N^8\nu_{\p,\q},\mathbf{f}_N\right>}{\partial\mathbf{p}}\frac{\partial\mathbf{p}}{\partial{\varphi(\p)}}\Big|_{\p=\q} \\&= -(\mathbf{J}^8)\frac{\partial\left<\nu_{\p,\q},\mathbf{J}^8\mathbf{f}_N\right>}{\partial\mathbf{p}}\Big|_{\p=\q} \\&=-(\mathbf{J}^8)\frac{\partial\left<\nu_{\p,\q},-\mathbf{f}_N\right>}{\partial\mathbf{p}}\Big|_{\p=\q} =  \mathbf{J}^8\dot{\mathbf{p}}.
\end{split}
\end{equation}
Thus, if we denote the left hand side of the equations by $F(\p)$, we get 
\begin{equation}
F(\varphi(\p)) = F(\mathbf{1} - \mathbf{J}_n^8\p) = \mathbf{J}_N^8F(\p) =  - D(\mathbf{1} -\mathbf{J}_N^8\p)F(\p),
\end{equation}
and that is exactly the symmetry condition, if we combine it with the time reversal.
\end{proof}

\section{Memory 1}
\label{sec: memory 1}
From this point onwards, we set the memory length equal to one and discuss the properties of adaptive dynamics. 

As before, we assume that the $p$-player is the ``leader" and that the $q$-player, the  ``follower". For brevity, we will use the notation $(p_{CC}, p_{CD},p_{DC},p_{DD}) = (p_1,p_2, p_3, p_4)$ (analogously, for $q$). 

In Accordance with CITE, the  transition matrix  has the form
\begin{equation}
    \label{eq: matri memory 1}
   M_1(\p,\q) =  \left(
\begin{array}{cccc}
 p_1 q_1 & p_1 (1-q_1) & (1-p_1) q_2 & (1-p_1) (1-q_2) \\
 p_2 q_3 & p_2 (1-q_3) & (1-p_2) q_4 & (1-p_2) (1-q_4) \\
 p_3 q_1 & p_3 (1-q_1) & (1-p_3) q_2 & (1-p_3) (1-q_2) \\
 p_4 q_3 & p_4 (1-q_3) & (1-p_4) q_4 & (1-p_4) (1-q_4) \\
\end{array}
\right)
\end{equation}
Following the recipe from Section \ref{sec:model}, we construct the payoff function:
\begin{small}
   \begin{equation}
  A(\mathbf{p},\mathbf{q}) =      \frac{\left| 
\begin{array}{cccc}
 p_1 q_1-1 & p_1-1 & (1-p_1) q_2 & f_1 \\
 p_2 q_3 & p_2-1 & (1-p_2) q_4 & f_2 \\
 p_3 q_1 & p_3 & (1-p_3) q_2-1 & f_3 \\
 p_4 q_3 & p_4 & (1-p_4) q_4 & f_4 \\
\end{array}
\right| }{\left| 
\begin{array}{cccc}
 p_1 q_1-1 & p_1-1 & (1-p_1) q_2 & 1 \\
 p_2 q_3 & p_2-1 & (1-p_2) q_4 & 1 \\
 p_3 q_1 & p_3 & (1-p_3) q_2-1 & 1 \\
 p_4 q_3 & p_4 & (1-p_4) q_4 & 1 \\
\end{array}
\right| }
   \end{equation}
\end{small}

To discuss the dynamics in more details, we consider a concrete case. Analogously to Section \ref{sec:model}, we assume that $f_1=B-C, f_2 = -C, f_3 = B, f_4=0$ for some $0<C<B<1$. 
\begin{remark}
    We can make such assumptions on the parameters and they do not come into conflict with the conditions (\ref{eq:conditions}). Indeed, we can solve for $a,b,c,d$ from REF
    \begin{equation}
\begin{cases}
b=-a+B,\\
c=  a+C,\\
d= -a-C-B
\end{cases}
    \end{equation}
    It can be easily verified that, thus defined, $c>a$ and $c-a=  C<b-d=B$.
\end{remark}
With these assumptions, the adaptive dynamics take the form 
\begin{small}
    \begin{equation}
        \begin{cases}
        \label{eq: main sustem equatinos}
\dot{p}_1=\frac{1}{A}\Bigg(p_3 p_4 \Big(B (p_2-p_4) (p_1 (-p_4)+p_1+p_2 (p_3-1)-p_3+p_4)+b (p_2-p_1)\\+C\left(p_4 (-2 p_1 p_2+2 p_2 p_3+p_2+1)+(p_2-1) (p_1 p_2-p_2 p_3-1)+p_4^2 (p_1-p_3-1)\right)\Big)\Bigg),\\
\dot{p}_2 =  \frac{1}{A}\Bigg((1-p_1) p_4 \Big(B (p_3 (p_1 p_2-p_2 p_3-1)+p_1 p_4 (p_3-p_1)+p_4)+C \Big(p_1^2 (p_2-p_4-1)\\+
p_1 p_3 (-2 p_2+2 p_4+1)+p_3 (p_3+1) (p_2-p_4)-p_2+p_4+1\Big)\Big)\\
\dot{p}_3 = -\frac{(p_1-1) p_4}{A} \Bigg(b (p_2-p_4) (p_1 (-p_4)+p_1+p_2 (p_3-1)-p_3+p_4)+B(p_2-p_1)\\+C \Big(p_4 (-2 p_1 p_2+2 p_2 p_3+p_2+1)+(p_2-1) (p_1 p_2-p_2 p_3-1)+p_4^2 (p_1-p_3-1)\Big)\Bigg),\\
\dot{p}_4 = \frac{(1-p_1) (p_2-1)}{A} \Bigg(B \Big(\left(p_1^2-1\right) p_4-p_1 p_3 (p_2+p_4)+p_2 p_3^2+p_3\Big)\\+C \Big(p_1^2 (-p_2+p_4+1)+p_1 p_3 (2 p_2-2 p_4-1)-p_3 (p_3+1) (p_2-p_4)+p_2-p_4-1\Big)\Bigg)
        \end{cases}
    \end{equation}
\end{small}
where 
\begin{equation}
    \begin{split}
        A&=(p_1 (p_2-1)-2 p_2 p_3+p_2+p_3 p_4-1) (p_1 (p_2-2 p_4-1)-p_2+(p_3+2) p_4+1)^2.
    \end{split}
\end{equation}
The natural place to start our investigations is to determine the location of the equilibria. 
This is accomplished by equating the left hand side of the equations to zero. Consequently, we get
\begin{small}
    \begin{equation}
    \label{eq:equilibria}
    \begin{aligned}[c]
            &\begin{cases}
               p_1= 1,\\
               p_3= \frac{(p_2-1) (B-C) (p_2-p_4-1)}{B (p_2-1) (p_2-p_4)-C \left(-2 p_2 p_4+(p_2-1) p_2+p_4^2\right)}
               \end{cases}\\
               &\begin{cases}
              p_1=\frac{(B-C) p_2+C(p_4+1)}{B},\\
              p_3= \frac{C(1-p_2) + p_4(B+C)}{B}
               \end{cases}\\
               &\begin{cases}
              p_2= \frac{B p_3+C\left(p_1^2-p_1 p_3-1\right)}{B p_3 (p_1-p_3)+C \left((p_1-p_3)^2+p_3-1\right)}\\
              p_4= 0,
               \end{cases}\\
        \end{aligned}
        \begin{aligned}[c]
          &  \begin{cases}
                p_1 = \frac{C p_4}{B}+1\\
                p_2=1\\
                p_3=\frac{p_4 (B+C)}{B}
            \end{cases} \\
        &    \begin{cases}
                p_2= \frac{C (p_1-1)}{B}+p_1\\
                p_3= 0\\
                p_4 = \frac{C (p_1-1)}{B }
            \end{cases}
        \end{aligned}
    \end{equation}
\end{small}

Observe that the solutions in the left column are in the $[0,1]^4$ cube, whereas the ones in the right column are outside it or on the boundary.  
Focusing or attention on the right column, we can further observe that the first solution lies strictly outside $[0,1]^4$ while the second one  degenerates into $p_1 = 1,  \ p_2 = =1,  \ p_3=0, \ p_4 = 0$, since $p_1-1\le 0$ for all $p_1$ in the cube. 

Thus, the only solution that intersects $(0,1)^4$ is the second one in the left column; these equilibria form a two-dimensional plane parametrised by $p_2$ and $p_4$.

From this point, we can assume without loss of generality that $B=1$. This simplifying allows us to determine the linear stability of the solutions of the form
\begin{equation}
\label{eq: equilibrium}
\begin{cases}
    p_1 = (1-C)p_2 + + C(p_4-1)\\
    p_3 = C(1-p_2) + p_4(1 + C).
\end{cases}
\end{equation}

To do so, we consider the Jacobian of the system \eqref{eq: main sustem equatinos} and calculate the eigenvalues at the points belonging to the set \eqref{eq: equilibrium}.

The two eigenvalues $\lambda_1(p_2,p_4,c)$ and $
\lambda_2(p_2,p_4,c)$ are then given by 
\begin{small}
\begin{equation}
    \label{eq: eigenvalues}
    \begin{cases}
    \lambda_1 &=   -\frac{1}{F}\Bigg(2 c^2 (p_2-p_4-1) \left(p_2^2 (p_4-1)-2 p_2 \left(p_4^2+p_4-1\right)+p_4^3+p_4-1\right) \\&+\Big((c (p_2-1)^2 p_4 \left(c^2-2 c p_2-3\right)+(c-1) (p_2-1)^3 \left(-c^2+c+p_2+1\right)\\&-p_4^3 (c (c (c+6 p_2-2)+4 p_2+1)+2)+c (p_2-1) p_4^2 (c (c+6 p_2-2)+3)+(c+1) (2 c+1) p_4^4)^2\\&-8 c \left(c^2-1\right) p_4 (p_2-p_4+1) (-p_2+p_4+1)^2 ((c-1) (p_2-1)-c p_4) \left(c \left((p_2-1)^2-p_4^2\right)-2 (p_2-1) p_4\right)\Big)^{\frac12}\\&+c^3 (-p_2+p_4+1)^2 (p_2+p_4-1)-c \left(-(4 p_2+1) p_4^3+3 (p_2-1) p_4^2-3 (p_2-1)^2 p_4+(p_2-1)^3 p_2+3 p_4^4\right)\\&+p_2^4-2 p_2^3+2 p_2-p_4^4+2 p_4^3-1\Bigg),\\
    \lambda_2 &= \frac{1}{F}\Bigg(-2 c^2 (p_2-p_4-1) \left(p_2^2 (p_4-1)-2 p_2 \left(p_4^2+p_4-1\right)+p_4^3+p_4-1\right)\\&+\Big((c (p_2-1)^2 p_4 \left(c^2-2 c p_2-3\right)+(c-1) (p_2-1)^3 \left(-c^2+c+p_2+1\right)\\&-p_4^3 (c (c (c+6 p_2-2)+4 p_2+1)+2)+c (p_2-1) p_4^2 (c (c+6 p_2-2)+3)+(c+1) (2 c+1) p_4^4)^2\\&-8 c \left(c^2-1\right) p_4 (p_2-p_4+1) (-p_2+p_4+1)^2 ((c-1) (p_2-1)-c p_4) \left(c \left((p_2-1)^2-p_4^2\right)-2 (p_2-1) p_4\right)\Big)^{\frac12}\\&-c^3 (-p_2+p_4+1)^2 (p_2+p_4-1)+c \left(-(4 p_2+1) p_4^3+3 (p_2-1) p_4^2-3 (p_2-1)^2 p_4+(p_2-1)^3 p_2+3 p_4^4\right)\\&-p_2^4+2 p_2^3-2 p_2+p_4^4-2 p_4^3+1\Bigg)
    \end{cases}
\end{equation}
\end{small}
where 
\begin{equation}
    \begin{split}
        F:=2 (c-1)^2 (c+1) (p_2-p_4-1)^5 (p_2-p_4+1).
    \end{split}
\end{equation}

Due to the complexity of equations, we proceed numerically. Using Wolfram Mathematica, it can be established that the expression under the square root is positive for all values of $p_2,p_4,c\in(0,1)^3$. Therefore, the eigenvalues in \eqref{eq: eigenvalues} can be saddles, sources or sinks. 

Additional numerics demonstrate that $\lambda_1>0$ for all admissible $p_2,p_4,c$ -- this excludes sinks. 

The value $\lambda_2$, however, can be positive or negative, depending on the variables. This demonstrates
\begin{lemma}
    The plane of equilibria \eqref{eq: equilibrium} consists of  degenerate saddles and degenerate sources. By degeneracy here we mean that two eigenvalues out of four are zero.
\end{lemma}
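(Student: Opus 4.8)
The plan is to separate the statement into a structural part (why two eigenvalues vanish) and an analytic part (the signs of the two surviving eigenvalues), and to treat them independently. The degeneracy is really a consequence of geometry rather than of the explicit formulas: the relations \eqref{eq: equilibrium} cut out a two-dimensional affine plane of equilibria, parametrised by $(p_2,p_4)$. Since the right-hand side of \eqref{eq: main sustem equatinos} vanishes identically on this plane, its directional derivative along any direction tangent to the plane is also identically zero. Hence the tangent space to the plane, which is two-dimensional and spanned by the derivatives of the parametrisation with respect to $p_2$ and $p_4$, lies in the kernel of the Jacobian $J$ at every point of \eqref{eq: equilibrium}. This forces at least two zero eigenvalues and, equivalently, a factor $\lambda^2$ in the characteristic polynomial. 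This is exactly the ``two eigenvalues out of four are zero'' degeneracy in the statement, and it holds for free, with no computation.

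Next I would extract the two remaining eigenvalues. Writing $\det(J-\lambda I)=\lambda^2\,Q(\lambda)$ with $Q$ a quadratic (this is where the first step pays off: we reduce a quartic to a quadratic), the nonzero eigenvalues are the roots of $Q$. Substituting the equilibrium relations \eqref{eq: equilibrium} into $J$ under the normalisation $B=1$ and simplifying $Q$ should reproduce precisely the closed forms $\lambda_1,\lambda_2$ recorded in \eqref{eq: eigenvalues}. With these in hand, the classification reduces to three sign facts about functions on the open cube $(p_2,p_4,c)\in(0,1)^3$: that the quantity under the radical is nonnegative, so $\lambda_1,\lambda_2$ are real and no spiral directions occur; that $\lambda_1>0$ throughout, which rules out the case of two negative nonzero eigenvalues and hence excludes (degenerate) sinks; and that $\lambda_2$ takes both signs. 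Granting these, the dichotomy follows immediately: on the region where $\lambda_2>0$ the two nonzero eigenvalues are both positive, giving a degenerate source, while on the region where $\lambda_2<0$ one is positive and one negative, giving a degenerate saddle, with the locus $\lambda_2=0$ forming the boundary between the two.

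The main obstacle is the third step, the global sign analysis, because the expressions in \eqref{eq: eigenvalues} are far too large for a direct symbolic argument. I would separate the easy claims from the hard ones. The sign-change of $\lambda_2$ is easy: by continuity it suffices to exhibit one parameter point with $\lambda_2>0$ and one with $\lambda_2<0$, two evaluations. The genuinely global claims are nonnegativity of the discriminant and strict positivity of $\lambda_1$ on all of $(0,1)^3$. The cleanest rigorous route here is to clear denominators and reduce each to the positivity of a single polynomial in $(p_2,p_4,c)$, then certify that positivity by a sum-of-squares (or, more generally, Positivstellensatz) decomposition on the cube, or by a resultant/Sturm analysis showing the polynomial has no interior zero together with a check of its sign on the boundary faces. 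Absent such a factorisation, I would fall back, as the text does, on verified numerical evaluation: sample densely and bound the gradient of each expression to rule out any missed sign change between sample points. Either way, combining the certified signs from this step with the two geometric zero eigenvalues of the first step yields exactly the assertion that the equilibrium plane \eqref{eq: equilibrium} consists of degenerate saddles and degenerate sources, with no sinks.
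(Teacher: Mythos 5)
Your proposal is correct and, on the substantive classification step, it coincides with the paper's argument: compute the Jacobian of \eqref{eq: main sustem equatinos} on the equilibrium plane \eqref{eq: equilibrium} with $B=1$, obtain the two nonzero eigenvalues in the closed form \eqref{eq: eigenvalues}, verify numerically (the paper uses Mathematica) that the expression under the radical is positive on $(0,1)^3$ so the eigenvalues are real, that $\lambda_1>0$ throughout (excluding sinks), and that $\lambda_2$ takes both signs (yielding both degenerate sources and degenerate saddles). Where you genuinely add something is the degeneracy itself: the paper simply asserts that two of the four eigenvalues vanish, implicitly reading this off the symbolic computation, whereas you derive it structurally -- the vector field vanishes identically on the two-dimensional plane of equilibria, so differentiating along any curve in the plane shows the tangent space lies in the kernel of the Jacobian, forcing a $\lambda^2$ factor in the characteristic polynomial with no computation. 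That observation is sound (geometric multiplicity bounds algebraic multiplicity from below) and is more robust than the paper's route, since it explains the degeneracy as a consequence of the equilibria forming a continuum rather than as a computational accident; your suggested sum-of-squares or Sturm certification of the two global sign claims would also upgrade the paper's purely numerical evidence to a rigorous proof, though as you note the fallback to verified numerics is exactly what the paper does. One shared loose end, present in both your argument and the paper's: on the codimension-one locus where $\lambda_2=0$ the classification into saddles and sources breaks down (three eigenvalues vanish there), so the lemma as stated holds only off that boundary set; you at least identify this locus explicitly as the interface between the two regimes.
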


At this point, looking at the equations \eqref{eq: main sustem equatinos}, we can make an important observation that distinguishes the alternating case from the simultaneous one.  

\begin{theorem}
For any values of $f_1,f_2,f_3, f_4$ the dynamics have the two following invariant quantities:
\begin{enumerate}
    \item $F_1 := (p_1-1)^2 + p_3^2$,
    \item $F_
    2:= (p_2-1)^2 + p_4^2$.
\end{enumerate}
\end{theorem}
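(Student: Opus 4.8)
The plan is to verify directly that $\dot F_1 = 0$ and $\dot F_2 = 0$ along every solution of the adaptive dynamics, organising the computation so that the phrase ``for any $f_1,f_2,f_3,f_4$'' becomes transparent rather than a fortunate coincidence of the particular parametrisation. By the chain rule,
\[
\dot F_1 = 2(p_1-1)\dot p_1 + 2 p_3 \dot p_3, \qquad \dot F_2 = 2(p_2-1)\dot p_2 + 2 p_4 \dot p_4,
\]
so it suffices to show that each of these two combinations vanishes identically on $(0,1)^4$.

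First I would exploit how the field depends on the payoff. By Lemma \ref{st: payoff function} the numerator $|\Mb_1^1|$ is linear in the entries $f_1,\dots,f_4$ (they occupy a single column), while the denominator $|\Mb_1^2|$ does not involve $f$; hence $A(\p,\q)$ is affine in $f$, and therefore each component of the dynamics is a linear combination $\dot p_i = \sum_{k=1}^4 c_{ik}(\p)\, f_k$, where the $c_{ik}$ are rational functions of $\p$ obtained after setting $\p=\q$. Substituting this gives $\dot F_1 = 2\sum_k \big[(p_1-1)c_{1k}+p_3 c_{3k}\big] f_k$ and $\dot F_2 = 2\sum_k \big[(p_2-1)c_{2k}+p_4 c_{4k}\big] f_k$. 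Since the $f_k$ are free, the theorem is equivalent to the polynomial identities $(p_1-1)c_{1k}+p_3 c_{3k}=0$ and $(p_2-1)c_{2k}+p_4 c_{4k}=0$ for each $k$, which I would check one payoff coefficient at a time.

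The decisive structural feature, already visible in the worked system \eqref{eq: main sustem equatinos}, is that $\dot p_1$ carries an overall factor $p_3$ and $\dot p_3$ an overall factor $(p_1-1)$, the two sharing the common prefactor $p_4/A$; likewise $\dot p_2$ carries $(1-p_1)$ and $\dot p_4$ carries $(1-p_1)(p_2-1)$, again sharing $p_4/A$. Consequently $(p_1-1)\dot p_1$ and $p_3\dot p_3$ both acquire the prefactor $(p_1-1)p_3 p_4/A$, so $\dot F_1=0$ collapses to the equality of the two bracketed ``inner'' polynomials; similarly $(p_2-1)\dot p_2$ and $p_4\dot p_4$ both acquire $(1-p_1)(p_2-1)p_4/A$, so $\dot F_2=0$ collapses to the inner polynomial of $\dot p_2$ being the \emph{negative} of that of $\dot p_4$. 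I would then settle these two reduced identities by matching the coefficients of the two independent payoff combinations separately: for $F_2$ the two such polynomials are manifestly opposite in one combination and sum to zero in the other after cancelling the monomials $p_1 p_2 p_3,\ p_2 p_3^2,\ p_3,\ p_1 p_3 p_4,\ p_1^2 p_4,\ p_4$; for $F_1$ one pair coincides outright and the other agrees once the roles of the two linear payoff factors are correctly matched, using the equal-gains-from-switching relation $f_1+f_4=f_2+f_3$ (Lemma \ref{st: multiplication payoff by J}) built into every alternating payoff vector by construction.

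The only genuine obstacle is the bookkeeping in this final step: the inner polynomials are cubic in four variables, so confirming the two reduced identities is where all the labour lies, and it is the point at which the equal-gains relation reconciles the payoff coefficients. Conceptually the outcome is natural: writing $p_1-1=\sqrt{F_1}\cos\theta_1$, $p_3=\sqrt{F_1}\sin\theta_1$ (and analogously for $F_2$), the shared factorisations make $\dot p_1$ proportional to $p_3$ and $\dot p_3$ proportional to $-(p_1-1)$ with the \emph{same} coefficient, which is exactly the condition for a purely angular flow whose radial component $\dot F_1$ vanishes; this is what confines the dynamics to the tori $\mathbb{T}^2$ advertised in the introduction. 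Verifying the two polynomial identities makes this rigorous.
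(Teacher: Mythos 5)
Your core route coincides with the paper's: the paper also proves this by directly verifying that $\nabla F_1$ and $\nabla F_2$ annihilate the vector field of \eqref{eq: main sustem equatinos}, and your observation about the shared prefactors --- so that $\dot F_1=0$ collapses to the equality of the inner polynomials of $\dot p_1$ and $\dot p_3$, and $\dot F_2=0$ to the inner polynomial of $\dot p_2$ being minus that of $\dot p_4$ --- is exactly the mechanism that makes that verification go through. However, there is a genuine gap in how you pass from the worked system to the advertised generality. The system \eqref{eq: main sustem equatinos} is the specialization $(f_1,f_2,f_3,f_4)=(B-C,\,-C,\,B,\,0)$; since adding a constant multiple of $\mathbf{1}$ to the payoff vector leaves the dynamics unchanged, the two-parameter $(B,C)$ family covers precisely the equal-gains hyperplane $f_1+f_4=f_2+f_3$ and nothing more. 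Your step 2 correctly reduces the theorem to four coefficientwise identities $(p_1-1)c_{1k}+p_3c_{3k}=0$ and $(p_2-1)c_{2k}+p_4c_{4k}=0$, $k=1,\dots,4$, but your step 3 then checks only the two coefficient directions ($B$ and $C$) available in \eqref{eq: main sustem equatinos}. That is one direction short: the equal-gains-breaking direction (say $(f_1,f_2,f_3,f_4)=(1,0,0,0)$ modulo constants and the $(B,C)$ span) is never examined, so ``for any $f_1,f_2,f_3,f_4$'' is not established by what you describe. To close it, either rederive the field with symbolic $f_1,\dots,f_4$ and check the same factorizations there, or argue structurally: rows $1$ and $3$ of $\Mb_1$ contain the same $\q$-entries, so differentiating the stationarity equations gives $\big(\partial\nu/\partial p_k\big)^{T}(I-\Mb_1)=\nu_k\, r^{T}$ with one and the same vector $r$ for $k=1,3$ (and likewise for $k=2,4$); hence all four identities collapse to $(p_1-1)\nu_1+p_3\nu_3=0$ and $(p_2-1)\nu_2+p_4\nu_4=0$ at $\p=\q$, which hold because at $\p=\q$ the action sequence is a homogeneous second-order chain, giving $\nu_1p_1+\nu_3p_3=\nu_1$, $\nu_2p_2+\nu_4p_4=\nu_3$, and $\nu_2=\nu_3$ in the stationary state.

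Relatedly, your appeal to $f_1+f_4=f_2+f_3$ in the $F_1$ step is both inconsistent with your own framework and mathematically idle. Inconsistent, because step 2 declared the $f_k$ free, and if the coefficientwise identities genuinely required a linear relation among the $f_k$, the theorem as stated --- and your own reduction --- would be false. Idle, because within the parametrization $(B-C,-C,B,0)$ equal gains holds identically and therefore imposes no relation between the coefficients of $B$ and $C$; it cannot ``reconcile'' anything. What you actually ran into is a typo in \eqref{eq: main sustem equatinos}: the lowercase $b$ appearing in $\dot p_1$ and $\dot p_3$ should be $B$ (there is no third payoff parameter in that derivation), after which the inner polynomials of $\dot p_1$ and $\dot p_3$ coincide term by term, separately in $B$ and in $C$, just as those of $\dot p_2$ and $\dot p_4$ are exact negatives separately in $B$ and in $C$. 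The correct repair is to rederive that coefficient, not to impose a payoff constraint. It is fair to note that the paper's own one-line proof, referencing \eqref{eq: main sustem equatinos} as it does, is subject to the same specialization caveat; with the missing payoff direction checked (or the $\nu$-proportionality argument above) and $b=B$ read as a typo, your verification becomes a complete proof, and a more informative one than the paper's.
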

\begin{proof}
    This can be  demonstrated by taking the scalar products of the left hand side of (\ref{eq: main sustem equatinos}) and $\nabla F_1$,  $\nabla F_2$ and ascertaining that both of them are equal to 0. 
\end{proof}
\begin{remark}
It appears that these quantities are not connected to any symmetries of the system -- functions like these are typically related to rotational symmetries (specifically, rotating in the to -dimensional planes formed by $p_1$ and $p_3$, as well as $p_2$ and $p_4$). However, dynamics does not possess such symmetries. 
\end{remark}
\begin{corollary}
\label{corollary: toric coordinates}
     The dynamics of (\ref{eq: main sustem equatinos}) take place on a two-dimensional torus $\mathbf{T}^2$. 
\end{corollary}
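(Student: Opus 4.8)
The plan is to read the geometry straight off the two first integrals supplied by the preceding theorem. Write the system \eqref{eq: main sustem equatinos} as $\dot{\p}=X(\p)$ and consider the map $\Phi=(F_1,F_2)\colon\mathbb{R}^4\to\mathbb{R}^2$ given by $F_1(\p)=(p_1-1)^2+p_3^2$ and $F_2(\p)=(p_2-1)^2+p_4^2$. Since both functions are conserved along the flow, every orbit lies in a single fibre $\Phi^{-1}(c_1,c_2)$, so the whole task reduces to identifying these fibres as manifolds.

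First I would verify that $\Phi$ is a submersion on the region of interest. The gradients $\nabla F_1=(2(p_1-1),0,2p_3,0)$ and $\nabla F_2=(0,2(p_2-1),0,2p_4)$ involve disjoint coordinates, so they are linearly independent except on the degenerate loci $(p_1,p_3)=(1,0)$ and $(p_2,p_4)=(1,0)$. Consequently, for every regular value $(c_1,c_2)$ with $c_1,c_2>0$ the fibre $\Phi^{-1}(c_1,c_2)$ is a smoothly embedded two-dimensional submanifold of $\mathbb{R}^4$. The key structural point is that $\Phi$ splits: $F_1$ depends only on $(p_1,p_3)$ and $F_2$ only on $(p_2,p_4)$. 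Hence the fibre factorizes as a Cartesian product $\Phi^{-1}(c_1,c_2)=C_1\times C_2$, where $C_1=\{(p_1-1)^2+p_3^2=c_1\}$ and $C_2=\{(p_2-1)^2+p_4^2=c_2\}$ are circles in their respective coordinate planes. A product of two circles is by definition the two-torus, $C_1\times C_2\cong S^1\times S^1=\mathbb{T}^2$. Introducing the angular coordinates anticipated in the remark through $p_1-1=\sqrt{c_1}\cos\theta_1,\ p_3=\sqrt{c_1}\sin\theta_1,\ p_2-1=\sqrt{c_2}\cos\theta_2,\ p_4=\sqrt{c_2}\sin\theta_2$ turns $X$ into a vector field $(\dot\theta_1,\dot\theta_2)$ on $\mathbb{T}^2$, which makes the reduction explicit and finishes the argument once $\dot F_1=\langle\nabla F_1,X\rangle=0$ and $\dot F_2=0$ are invoked from the theorem.

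The main obstacle is reconciling this picture with the fact that strategies are confined to $[0,1]^4$. Both circles $C_1,C_2$ are centred at the corner $(1,0)$ of the corresponding unit square, so only the quarter-arc with $p_1\le1,\ p_3\ge0$ (respectively $p_2\le1,\ p_4\ge0$) is admissible, and within the cube a fibre is a product of arcs rather than a full torus. I would address this by stating the corollary for the flow regarded on all of $\mathbb{R}^4$, where the fibres are genuine tori and the cube merely picks out a coordinate patch. The alternative, namely gluing the angular variables across the boundary faces, would require the faces $p_1=1$, $p_3=0$, $p_2=1$, $p_4=0$ to be flow-invariant, and a glance at the factored right-hand sides of \eqref{eq: main sustem equatinos} shows that this invariance need not hold in general. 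Settling precisely in which sense the phase space is a torus is thus the one genuinely delicate point, and I expect it to be handled by the $\mathbb{R}^4$ formulation together with the explicit angular parametrization above.
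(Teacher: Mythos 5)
Your proposal is correct and follows essentially the same route as the paper: both arguments observe that $F_1$ and $F_2$ depend on disjoint pairs of variables, so a joint level set is the Cartesian product of two circles (centred at $(1,0)$ with radii $\sqrt{C_1}$, $\sqrt{C_2}$), i.e.\ $S^1\times S^1=\mathbb{T}^2$. Your added care about regular values, the explicit angular parametrization, and the caveat that $[0,1]^4$ only meets a quarter-arc of each circle are sound refinements of the same idea; the paper defers exactly these points to its subsequent change of variables \eqref{eq: change of variables angles} and Lemma \ref{st: lemma intervals of angles} on the image of the cube.
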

\begin{proof}
    Fixing $C_1$, the value of $F_1$ and $C_2$, the value of $F_2$, we observe that $p_1$ and $p_3$ lie on a circle of radius $\sqrt{C_1}$, centred at $(1,0)$. The same can be said for the variables $p_2$ and $p_4$, albeit the radius of the circle will be $\sqrt{C_2}$. Since the functions $F_1$ and $F_2$ have no variables in common, their common level set in  $\mathbb{R}^4$ will be the direct product of their individual level sets. that is, $S^1\times S^1 = \mathbf{T}^2$. 
\end{proof}
\begin{corollary}
    Since the motion is restricted to tori, the dynamics are globally bounded. 
\end{corollary}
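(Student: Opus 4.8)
The plan is to read boundedness directly off the conservation laws, so this corollary needs essentially no new work beyond the Theorem establishing invariance of $F_1,F_2$ and Corollary \ref{corollary: toric coordinates}. First I would fix an arbitrary solution $(p_1(t),p_2(t),p_3(t),p_4(t))$ of \eqref{eq: main sustem equatinos} with initial data in $(0,1)^4$. By the invariance of $F_1$ and $F_2$, these two quantities are constant along the entire trajectory; denote their (finite, nonnegative) values by $C_1$ and $C_2$, which are fixed by the initial condition.

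The second step is to convert each conserved quantity into a coordinatewise bound. Since $F_1=(p_1-1)^2+p_3^2=C_1$ is a sum of nonnegative terms, we obtain $(p_1(t)-1)^2\le C_1$ and $p_3(t)^2\le C_1$ for all $t$, i.e. $|p_1(t)-1|\le\sqrt{C_1}$ and $|p_3(t)|\le\sqrt{C_1}$. The identical argument applied to $F_2=(p_2-1)^2+p_4^2=C_2$ gives $|p_2(t)-1|\le\sqrt{C_2}$ and $|p_4(t)|\le\sqrt{C_2}$. Hence the whole trajectory stays inside the fixed box $[1-\sqrt{C_1},1+\sqrt{C_1}]\times[1-\sqrt{C_2},1+\sqrt{C_2}]\times[-\sqrt{C_1},\sqrt{C_1}]\times[-\sqrt{C_2},\sqrt{C_2}]$, so it is bounded; since the initial point was arbitrary, the dynamics are globally bounded. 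Equivalently, one may simply invoke Corollary \ref{corollary: toric coordinates}: the common level set $\{F_1=C_1,\,F_2=C_2\}$ is a product of two circles, hence the compact manifold $\mathbf{T}^2$, and any orbit confined to a compact set is bounded.

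There is no genuine obstacle here, since all the mathematical content lives in the invariance statement for $F_1$ and $F_2$; the corollary is a one-line consequence. The only point deserving a line of care is that $C_1,C_2$ are finite and the level sets nonempty, but both are immediate because the initial datum lies in the bounded cube $(0,1)^4$, so $C_1,C_2<\infty$ and the circles of radius $\sqrt{C_1},\sqrt{C_2}$ centred at $(1,0)$ are nonempty. For completeness I would remark that the box is invariant under the flow in both time directions, so the bound holds for every $t$ in the maximal interval of existence, justifying the word \emph{globally}.
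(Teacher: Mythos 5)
Your proposal is correct and is essentially the argument the paper intends: the corollary is stated as an immediate consequence of the invariance of $F_1,F_2$ and the compactness of the resulting level sets $S^1\times S^1$, exactly the one-line compactness observation you make at the end. Your explicit box bounds $|p_1(t)-1|\le\sqrt{C_1}$, $|p_3(t)|\le\sqrt{C_1}$ (and likewise for $p_2,p_4$) are a harmless spelling-out of the same content, and your remarks on finiteness of $C_1,C_2$ and bidirectional invariance are fine but not needed beyond what the paper already establishes.
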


Using these observations, we  no longer need to consider the entire phase space. Observe that for varying $C_1$ and $C_2$ the four-dimensional space $\mathbb{R}^4 $ foliates into two-dimensional tori. Hence, to study motion, we need to fix $C_1$ and $C_2$ and  investigate the motion on thusly obtained torus.

\subsection{Motion on tori}
\begin{figure}
    \centering
 \subfigure[$c=0.31, C_1 = 0.355, C_2= 0.314$]{\includegraphics[scale =.473]{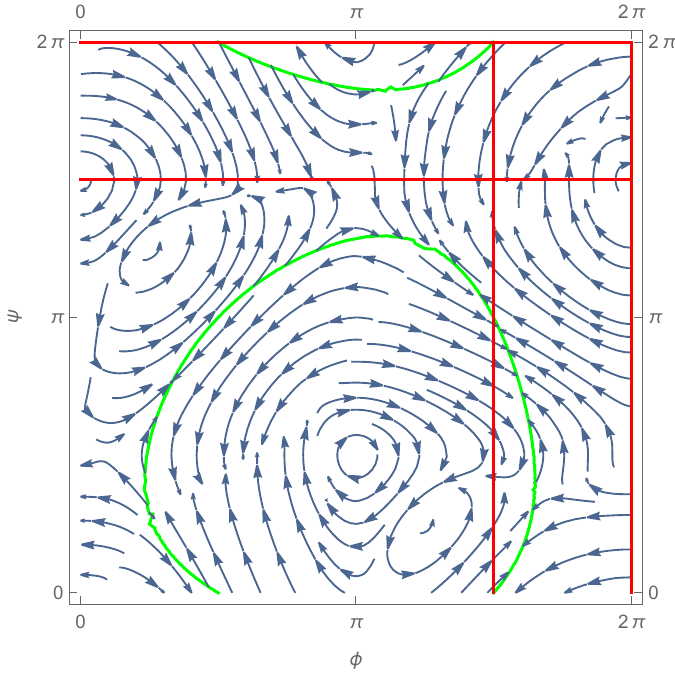}}
 \subfigure[$c=0.4, C_1 = 1.16422, C_2 = 1.158$]{\includegraphics[scale=.473]{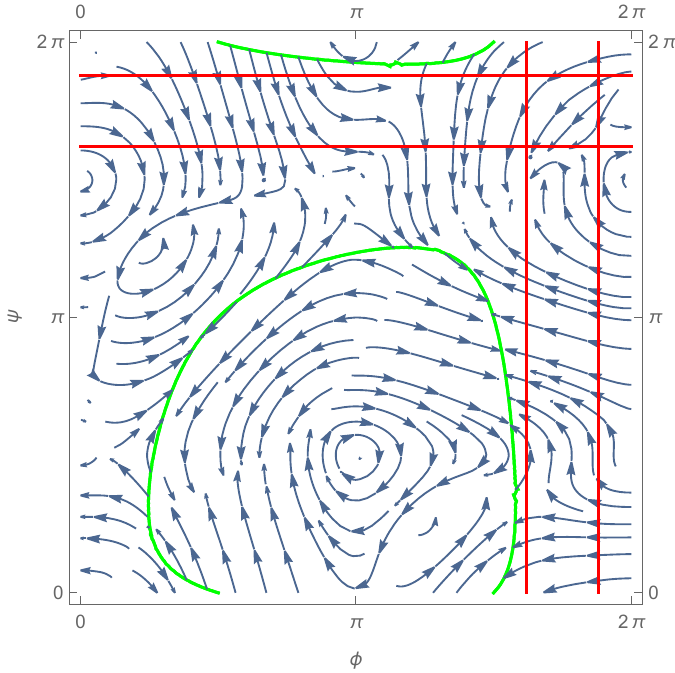}}
 \caption{Vector field on a torus for various value of $c, C_1, C_2$. Red curves are the zero of the denominator of (\ref{eq: equations on tori general}), and the quare formed by four lines is the image of the unit square. 
 }
    \label{fig:motion on tori}
\end{figure}

The main positive effect that existence of invariants has on a dynamical system is the possibility for reduction. This can be done in a number of ways: the most obvious one would be expressing, say, $p_3$ and $p_4$ from the system of equations
\[
\begin{cases}
    (p_1-1)^2 + p_3^2 = C_1\\
    (p_2-1)^2 + p_4^2=C_2.
\end{cases}
\]
This method requires using square roots -- however, the choice of signs is simple, as all $p_i$ must be positive. However, this method yields the equations that are too complex. 

Another way of reducing the motion is through expressing the variables $p_i$ via angles on the torus. This method is particularly illuminating as is does not distort the geometry of the system by ``flattening" it to the two-dimensional plane. Additionally, in order to restore the full motion, we only need to glue the opposite edges of the square. 

With these considerations in mind, we fix $C_1$ and $C_2$ and parametrise the torus by two angles $\phi$ and $\psi$, such that $\phi,\psi\in[0,2\pi]$ and 
\begin{equation}
\label{eq: change of variables angles}
    \begin{cases}
    p_1= 1+ \sqrt{C_1} \sin (\phi ),\\
   p_2=  1 + \sqrt{C_2} \sin (\psi ),\\
   p_3=  \sqrt{C_1}\cos (\phi)
   \\ p_4=\sqrt{C_2} \cos (\psi )
    \end{cases}
\end{equation}
After appropriate substitutions, we get a system of two equations reading 
\begin{small}
\begin{equation}
\label{eq: equations on tori general}
    \begin{cases}
      \dot{\phi} &=  \frac{\cos (\psi )}{C_1 G} \Bigg(2 \sqrt{C_1} \Big(-\sqrt{C_2} \sin (2 \psi ) (2 C \sin (\phi )-2 C \cos (\phi )+\sin (\phi )+\cos (\phi ))\\&+\sqrt{C_2} (2 C \sin (\phi )-2 C \cos (\phi )+\sin (\phi )+\cos (\phi ))\\&+2 \sin (\psi ) (c \sin (\phi )-C \cos (\phi )+\sin (\phi )+\cos (\phi ))+\sqrt{C_2} \cos (2 \psi ) (\sin (\phi )-\cos (\phi ))\Big)+(C-1) \sqrt{C_2}\Bigg)\\&+8 \sqrt{C_1} \cos ^2(\psi ) (c \cos (\phi )-(C+1) \sin (\phi ))-(C-1) \sqrt{C_2} (\sin (\psi )+\sin (3 \psi )+\cos (3 \psi )),\\
       \dot{\psi}&= \frac{\sin (\phi )}{C_2 G} \Bigg(-\sqrt{C_2} \cos (\psi ) \Big(\sqrt{C_1} ((2 C+1) \sin (2 \phi )+\cos (2 \phi ))-(2 C+1) \sqrt{C_1}\\&-4 (C+1) \sin (\phi )+2 (C+1) \cos (\phi )\Big)+\sqrt{C_2} \sin (\psi ) \Big((2 C-1) \sqrt{C_1} \sin (2 \phi )\\&+\sqrt{C_1} (-2 C+\cos (2 \phi )+1)-4 c \sin (\phi )+2 (C-1) \cos (\phi )\Big)-(C-1) \sqrt{C_1} (-\sin (2 \phi )+\cos (2 \phi )+1)\Bigg)
    \end{cases}
    \end{equation}
\end{small}
where
\begin{small}
\begin{equation}
\begin{split}
G&= 4 (\cos (\psi ) \cos (\phi )+\sin (\phi ) (\sin (\psi )-2 \cos (\psi )))^2\times\\& \Bigg(\sqrt{C_1} \cos (\phi ) \Big(\sqrt{C_2} (\cos (\psi )-2 \sin (\psi ))-2\Big)+\sqrt{C_2} \sin (\psi ) \Big(\sqrt{C_1} \sin (\phi )+2\Big)\Bigg)
\end{split}
\end{equation}
\end{small}
This is indeed a vector field on a torus, as shown in Figure \ref{fig:motion on tori}.  We will discuss the figure in detail later. 

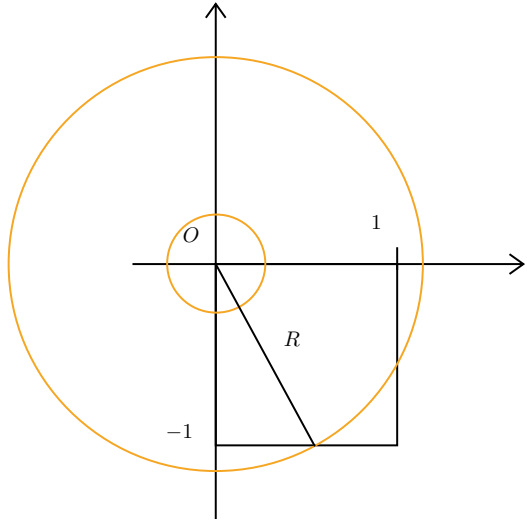
\begin{figure}
    \centering
\tikzset{every picture/.style={line width=0.75pt}} 

\begin{tikzpicture}[x=0.75pt,y=0.75pt,yscale=-1,xscale=1]

\draw  (165.83,144.17) -- (362.98,144.17)(207.83,12.85) -- (207.83,272.85) (355.98,139.17) -- (362.98,144.17) -- (355.98,149.17) (202.83,19.85) -- (207.83,12.85) -- (212.83,19.85)  ;
\draw   (207.83,144.17) -- (299.33,144.17) -- (299.33,235.67) -- (207.83,235.67) -- cycle ;
\draw  [color={rgb, 255:red, 245; green, 166; blue, 35 }  ,draw opacity=1 ] (183.33,143.92) .. controls (183.33,130.25) and (194.41,119.17) .. (208.08,119.17) .. controls (221.75,119.17) and (232.83,130.25) .. (232.83,143.92) .. controls (232.83,157.59) and (221.75,168.67) .. (208.08,168.67) .. controls (194.41,168.67) and (183.33,157.59) .. (183.33,143.92) -- cycle ;
\draw  [color={rgb, 255:red, 245; green, 166; blue, 35 }  ,draw opacity=1 ] (103.37,144.17) .. controls (103.37,86.48) and (150.14,39.71) .. (207.83,39.71) .. controls (265.52,39.71) and (312.29,86.48) .. (312.29,144.17) .. controls (312.29,201.86) and (265.52,248.63) .. (207.83,248.63) .. controls (150.14,248.63) and (103.37,201.86) .. (103.37,144.17) -- cycle ;
\draw    (207.83,144.17) -- (257.83,236.17) ;
\draw    (299.33,135.67) -- (299.33,147.17) ;

\draw (241,177) node [anchor=north west][inner sep=0.75pt]  [xscale=0.8,yscale=0.8] [align=left] {$\displaystyle R$};
\draw (285,118.5) node [anchor=north west][inner sep=0.75pt]  [xscale=0.8,yscale=0.8] [align=left] {$\displaystyle 1$};
\draw (190,124.5) node [anchor=north west][inner sep=0.75pt]  [xscale=0.8,yscale=0.8] [align=left] {$\displaystyle O$};
\draw (181.5,224) node [anchor=north west][inner sep=0.75pt]  [xscale=0.8,yscale=0.8] [align=left] {$\displaystyle -1$};

\end{tikzpicture}
\caption{Possible values of $\phi$ and $\psi$}
\label{fig: values of angles}
\end{figure}

We observe two things. Firstly, due to the limitation $\p\in[0,4]^4$ we are no interested in all tori, only in those that intersect the four dimensional cube. 

Secondly, even for the tori that do intersect the cube, we need not consider the motion on their entire surface. The exact form of the region we will be considering is the ropc of discussion in the section below. 

\subsubsection{Intersection of tori with $[0,1]^4$ and existence of solutions}

From the substitution (\ref{eq: change of variables angles}) we note that  the following must hold:
\[
\begin{cases}
    \sqrt{C_1}\sin(\phi)\in(-1,0)\\
     \sqrt{C_1}\cos(\phi)\in(0,1),\\
     \sqrt{C_2}\sin(\psi)\in(-1,0),\\ \sqrt{C_2}\cos(\psi)\in(0,1).
\end{cases}
\]

We perform explicit calculations for $\phi$ and $C_1$, and then identical deductions can be made for $\psi$ and $C_2$. 

The two expressions $\sqrt{C_1}\sin(\phi)$ and $\sqrt{C_1}\cos(\phi)$ belong to a circle (for a geometric illustration, see Figure \ref{fig: values of angles}). We deduce that the variables must be in the fourth quadrant of the plane. 

In order for the intersections to exist, note that $0<C_1<2$ must hold. With varying $C_1$, the intervals  of  admissible values of $\phi$ can be of two types:
\begin{enumerate}
    \item when $0<C_1<1$ (inner orange cirle ) the admissible interval is $\left[\frac{3\pi}{2}, 2\pi\right]$;
    \item when $1<C_1<2$ (larger orange circle), the admissible interval corresponds to the values of the angle between the two intersections of the square with the circle. 
\end{enumerate} 
After performing elementary calculations (which we omit here for brevity), we obtain

\begin{lemma}
\label{st: lemma intervals of angles}
After the coordinate change as in (\ref{eq: change of variables angles}), the image of the 4-cube is one of the following rectangles:
\begin{enumerate}
    \item $\phi\in\left(\frac{3\pi}{2}, 2\pi\right),\psi\in\left(\frac{3\pi}{2}, 2\pi\right)$, if $C_1,C_2\in(0,1]$;
    \item  $\phi\in\left(\frac{3\pi}{2}, 2\pi\right),\psi\in\left(2 \pi -\cos ^{-1}\left(\frac{1}{\sqrt{C_2}}\right), 2 \pi -\sin ^{-1}\left(\frac{1}{\sqrt{C_2}}\right)\right)$, if $C_1\in(0,1],C_2\in(1,2]$;
    \item  $\phi\in\left(2 \pi -\cos ^{-1}\left(\frac{1}{\sqrt{C_1}}\right), 2 \pi -\sin ^{-1}\left(\frac{1}{\sqrt{C_1}}\right)\right),\psi\in\left(\frac{3\pi}{2}, 2\pi\right)$, if $C_1\in(1,2],C_2\in(0,1)$;
    \item  $\phi\in\left(2 \pi -\cos ^{-1}\left(\frac{1}{\sqrt{C_1}}\right), 2 \pi -\sin ^{-1}\left(\frac{1}{\sqrt{C_1}}\right)\right),\ $ $\psi\in\left(2 \pi -\cos ^{-1}\left(\frac{1}{\sqrt{C_2}}\right), 2 \pi -\sin ^{-1}\left(\frac{1}{\sqrt{C_2}}\right)\right)$, if $C_1,C_2\in(1,2]$;
\end{enumerate}
\end{lemma}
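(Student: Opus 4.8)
The plan is to reduce the statement to a planar geometry problem and to exploit that the constraints on $(\phi,C_1)$ and on $(\psi,C_2)$ are completely independent, so that the admissible region is a Cartesian product of two intervals, i.e. a rectangle. From \eqref{eq: change of variables angles}, membership $\p\in[0,1]^4$ is equivalent to two decoupled pairs of conditions: $p_3=\sqrt{C_1}\cos\phi\in(0,1)$ and $p_1-1=\sqrt{C_1}\sin\phi\in(-1,0)$ for the first factor, together with the identical conditions in $(\psi,C_2)$ for the second. Thus it suffices to analyse the $\phi$-factor and then take the product with the verbatim $\psi$-analysis. Geometrically, the point $(\sqrt{C_1}\cos\phi,\sqrt{C_1}\sin\phi)$ must lie in the intersection of the circle of radius $\sqrt{C_1}$ centred at the origin with the open square $(0,1)\times(-1,0)$; the sign requirements $\cos\phi>0$, $\sin\phi<0$ already pin $\phi$ into the fourth quadrant $\left(\tfrac{3\pi}{2},2\pi\right)$, on which both $\sin$ and $\cos$ are strictly increasing.

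Next I would split on the radius. If $C_1\le 1$ then $\sqrt{C_1}\le 1$, so every point of the fourth-quadrant arc automatically satisfies $|\sqrt{C_1}\cos\phi|\le 1$ and $|\sqrt{C_1}\sin\phi|\le 1$; only the sign (quadrant) constraints bind, and the admissible set is the entire open arc $\phi\in\left(\tfrac{3\pi}{2},2\pi\right)$. This yields the $\phi$-interval appearing in cases (1) and (2). If instead $1<C_1\le 2$ --- the largest range for which the circle still meets the open square --- the arc overshoots the unit square near each of its two ends, and I must clip it at the two points where the circle crosses the lines $x=1$ and $y=-1$.

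To locate the clip points I use the monotonicity just noted. The constraint $p_3<1$, i.e. $\cos\phi<1/\sqrt{C_1}$, bounds $\phi$ from above, while the constraint $p_1>0$, i.e. $\sin\phi>-1/\sqrt{C_1}$, bounds $\phi$ from below; solving each for the unique fourth-quadrant angle produces endpoints of the form $2\pi-\arccos(1/\sqrt{C_1})$ and $2\pi-\arcsin(1/\sqrt{C_1})$. The identity $\arccos x+\arcsin x=\tfrac{\pi}{2}$, combined with $\arccos(1/\sqrt{C_1})<\arcsin(1/\sqrt{C_1})$ on $(1,2)$, certifies that the resulting interval is non-empty. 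Running the identical argument for $\psi$ and combining the four combinations ($C_1\le 1$ or $C_1>1$, with $C_2\le 1$ or $C_2>1$) then produces exactly the four rectangles of the statement; the boundary values $C_1=1$ or $C_2=1$ are absorbed by the half-open conventions.

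I expect the only real obstacle to be bookkeeping rather than depth: one must track carefully which geometric constraint ($p_3<1$ versus $p_1>0$) supplies the upper and which the lower endpoint, since a single sign slip interchanges $\arccos$ and $\arcsin$ and collapses the interval to the empty set. Concretely, for the interval to respect $\arccos(1/\sqrt{C_1})<\arcsin(1/\sqrt{C_1})$ the lower endpoint must be the $\arcsin$-expression $2\pi-\arcsin(1/\sqrt{C_1})$ and the upper endpoint the $\arccos$-expression $2\pi-\arccos(1/\sqrt{C_1})$; verifying this ordering (and the analogous one for $\psi$) is the step that warrants the most care. Everything else is elementary trigonometry on a strictly monotone arc.
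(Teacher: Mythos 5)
Your proof is correct and follows essentially the same route as the paper: the paper likewise decouples the $(\phi,C_1)$ and $(\psi,C_2)$ constraints, views $\left(\sqrt{C_1}\cos\phi,\sqrt{C_1}\sin\phi\right)$ as a point on a circle meeting the square $(0,1)\times(-1,0)$ in the fourth quadrant, splits on whether $C_1\le 1$ or $1<C_1\le 2$, and then omits precisely the clipping computation that you carry out via the monotonicity of $\sin$ and $\cos$ on $\left(\frac{3\pi}{2},2\pi\right)$. One point worth making explicit: your ordering check is not mere bookkeeping but an actual correction --- since $\arccos\left(\frac{1}{\sqrt{C_1}}\right)<\arcsin\left(\frac{1}{\sqrt{C_1}}\right)$ for $C_1\in(1,2)$, the admissible interval must be $\left(2\pi-\arcsin\left(\frac{1}{\sqrt{C_1}}\right),\,2\pi-\arccos\left(\frac{1}{\sqrt{C_1}}\right)\right)$ as you derive, whereas the lemma as printed lists the endpoints in the opposite order (so that, read literally, its intervals in cases (2)--(4) are empty); your proof thus establishes the intended statement while exposing a transposition typo in it.
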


Note that these rectangles do not intersect the curves that describe zeros of the denominator-- this follows from denominator having no zeros inside $(0,1)^4$. 

The images of the four dimensional cube  are shown in Figure \ref{fig:motion on tori} as the insides of the rectangles with red boundaries. As it can be observed from Figure \ref{fig:motion on tori}, the little rectangle does not always contain an equilibrium However, by varying $C_1$ and $C_2$ we can make equilibria appear. 

This begs the following question: what restrictions do we need to place on $C_1,C_2$ in order for equilibria to belong to the rectangle?

The equations (\ref{eq: equations on tori general}) are hard to solve explicitly; we circumvent that by substituting the expressions (\ref{eq: change of variables angles}) into the pre-calculated equations for equilibria in (\ref{eq:equilibria}), in particular, the second one on the right. This yields
\begin{equation}
    \label{eq: equilibiria tori}
    \begin{cases}
       \sqrt{C_1} \sin (\phi )=\sqrt{C_2} (-C \sin (\psi )+C \cos (\psi )+\sin (\psi ))\\
        \sqrt{C_1} \cos (\phi )=\sqrt{C_2} ((C+1) \cos (\psi )-C \sin (\psi ))
    \end{cases}
\end{equation}
These equations can be transformed in the following way: first, subtract the second equation form the first  them up to obtain
\begin{equation}
    \label{eq: tori equation 1}
    \sqrt{C_1}\left(\sin(\phi) - \cos(\phi)\right) = \sqrt{C_2}\left(\sin(\psi) -\cos(\psi)\right).
\end{equation}
Dividing by $\sqrt{2}$ and using the identity 
\[\frac{1}{\sqrt{2}}\sin(x) - \frac{1}{\sqrt{2}}\cos(x) = \sin\left(x - \frac{\pi}{4}\right),\]  we can transform (\ref{eq: tori equation 1}) into
\begin{equation}
    \label{eq: equation 1 tori final form}
    \sin\left(\phi-\frac{\pi}{4}\right)  = \sqrt{\frac{C_2}{C_1}}\sin\left(\psi-\frac{\pi}{4}\right).
\end{equation}
Another way of simplifying the equations is squaring both and summing up, to obtain
\[
C_1 = C_2\left(\left(C \cos(\psi) + (1-C)\sin(\psi)\right)^2 + \left((C+1)\cos(\psi) - C\sin(\psi)\right)^2\right)
\]
or
\[
2 C^2 C_2+2 C C_2 (\cos (2 \psi )-C \sin (2 \psi ))-C_1+C_2=0. 
\]
To solve this, we use a similar trick and divide both parts of the equation by $\sqrt{C^2+1}$, obtaining
\begin{equation}
    \begin{split}
         \frac{C}{\sqrt{1+C^2}}\sin(2\psi) -\frac{1}{\sqrt{1+C^2}}\cos(2\psi)  &= \frac{C_2+2C_2C^2-C_1}
        {2C\sqrt{1+C^2}C_2};\\
        \sin\left(2\psi  - \arccos\left(\frac{C}{\sqrt{1 +C^2}}\right)\right) &=\frac{C_2+2C_2C^2-C_1}
        {2C\sqrt{1+C^2}C_2}\\
        \end{split}
        \end{equation}
Note that since both $\frac{C}{\sqrt{1 + C^2}}$ and $\frac{1}{\sqrt{1 + C^2}}$ are positive expressions, we can assume that $\arccos\left(\frac{C}{\sqrt{1 + C^2}}\right)\in\left(0,\pi/2\right)$ and is uniquely defined.  Solving this explicitly yields
        \begin{equation}
        \label{eq: psi solution}
            \begin{split}
        2\psi &=\begin{sqcases}
\arcsin\left(\frac{C_2+2C_2C^2-C_1}
        {2C\sqrt{1+C^2}C_2}\right) &+ \arccos\left(\frac{C}{\sqrt{1+C^2}}\right) + 2\pi k, \ \ k\in\mathbb{Z}\\
-\arcsin\left(\frac{C_2+2C_2C^2-C_1}
        {2C\sqrt{1+C^2}C_2}\right) &+ \arccos\left(\frac{C}{\sqrt{1+C^2}}\right) + \pi(2 k+1), \ \ k\in\mathbb{Z}        \end{sqcases} \\
        &\mathrm{or}\\
        \psi &=\begin{sqcases}
        \frac{1}{2}\Bigg(\arcsin\left(\frac{C_2+2C_2C^2-C_1}
        {2C\sqrt{1+C^2}C_2}\right) + \arccos\left(\frac{C}{\sqrt{1+C^2}}\right)\Bigg) + \pi k , \ \ k\in\mathbb{Z}\\
     \frac{1}{2}\Bigg(-\arcsin\left(\frac{C_2+2C_2C^2-C_1}
        {2C\sqrt{1+C^2}C_2}\right) + \arccos\left(\frac{C}{\sqrt{1+C^2}}\right)\Bigg) + \frac{(2 k+1)}{2}\pi, \ \ k\in\mathbb{Z}  
        \end{sqcases}
    \end{split}
\end{equation}

Both the last expression from (\ref{eq: psi solution}) and the consequent value of $\phi$
\begin{equation}
\label{eq: value of phi}
\phi = \begin{sqcases}
\frac{\pi}{4}  + \arcsin\left(\sqrt{\frac{C_2}{C_1}}\sin\left(\psi-\frac{\pi}{4}\right)\right) + 2\pi l,\ \ l\in\mathbf{Z}\\
\frac{\pi}{4}  - \arcsin\left(\sqrt{\frac{C_2}{C_1}}\sin\left(\psi-\frac{\pi}{4}\right)\right) + \pi (2l+1),\ \ l\in\mathbf{Z}
\end{sqcases}
\end{equation}
must belong to one of the permitted intervals from Lemma \ref{st: lemma intervals of angles}. 

Leveraging in this, we can summarise:
\begin{lemma}
    \label{st: lemma wwhen solutions lie in square}
    For given values of $C_1, C_2$ and $C$ equilibria exist in the image of $(0,1)^4$ if exist $k,l\in\mathbf{Z}$ such that  the value in (\ref{eq: psi solution}),\ref{eq: value of phi}) belong to correponding intervals from Lemma \ref{st: lemma intervals of angles}. 
\end{lemma}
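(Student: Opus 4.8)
The plan is to read the statement as an equivalence: a point of the flow (\ref{eq: equations on tori general}) is an equilibrium lying in the image of the open cube $(0,1)^4$ precisely when one of the explicit angle pairs produced by (\ref{eq: psi solution}) and (\ref{eq: value of phi}) falls into the admissible rectangle of Lemma \ref{st: lemma intervals of angles}. The whole argument is therefore a matter of assembling the earlier computations into a clean ``if and only if'', and the real work lies in checking that none of the intermediate transformations has lost or created solutions.

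First I would fix $C_1$ and $C_2$ and identify the equilibria of the flow on the corresponding torus with the equilibria of the ambient system (\ref{eq: main sustem equatinos}) restricted to the level set $\{F_1=C_1,\ F_2=C_2\}$. Because the change of variables (\ref{eq: change of variables angles}) is a diffeomorphism from the relevant rectangle of Lemma \ref{st: lemma intervals of angles} onto the intersection of the torus with the cube, a zero of $(\dot\phi,\dot\psi)$ inside that rectangle corresponds to a zero of the right-hand side of (\ref{eq: main sustem equatinos}) inside $(0,1)^4$, and conversely. By the earlier classification, the only equilibria meeting $(0,1)^4$ lie on the two-plane (\ref{eq: equilibrium}) (the second family in the left column of (\ref{eq:equilibria})), so on a fixed torus the equilibria are exactly the points of that plane satisfying $F_1=C_1$ and $F_2=C_2$; since the plane is two-dimensional and the torus is two-dimensional inside $\mathbb{R}^4$, their intersection is generically zero-dimensional, which is what yields a discrete solution set indexed by integers.

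Next I would substitute (\ref{eq: change of variables angles}) into that equilibrium relation to recover the pair (\ref{eq: equilibiria tori}), and then record that the two trigonometric reductions already carried out---the subtraction yielding (\ref{eq: equation 1 tori final form}) and the square-and-sum yielding the equation solved in (\ref{eq: psi solution})---together form a system equivalent to (\ref{eq: equilibiria tori}). This equivalence is the one place that needs genuine care: squaring and adding is only a necessary consequence of (\ref{eq: equilibiria tori}), so I would verify that reinstating the linear relation (\ref{eq: equation 1 tori final form}) fixes $\phi$ from $\psi$ through (\ref{eq: value of phi}) and that the resulting pair $(\phi,\psi)$ satisfies \emph{both} original equations, discarding any spurious branch introduced by the squaring. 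Once this is settled, (\ref{eq: psi solution}) and (\ref{eq: value of phi}), ranging over $k,l\in\mathbb{Z}$, enumerate all real solutions of (\ref{eq: equilibiria tori}).

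Finally I would combine the two facts. A pair $(\phi,\psi)$ solving (\ref{eq: equilibiria tori}) is an equilibrium lying in the image of $(0,1)^4$ exactly when $(\phi,\psi)$ belongs to the admissible rectangle of Lemma \ref{st: lemma intervals of angles}, because that rectangle is by construction the preimage of the cube under (\ref{eq: change of variables angles}) on the fixed torus. Since every solution has the form (\ref{eq: psi solution})--(\ref{eq: value of phi}) for some $k,l$, such an equilibrium exists in the cube if and only if some choice of $k,l$ places those values inside the rectangle, which is precisely the assertion. The main obstacle, as noted, is the bookkeeping around the squaring step: I expect a clean statement to require matching the arcsine/arccosine branch choices in (\ref{eq: psi solution}) with the $\pm$ branch in (\ref{eq: value of phi}), so that no extraneous root survives and no genuine root is dropped; everything else reduces to direct substitution and the diffeomorphism property of (\ref{eq: change of variables angles}).
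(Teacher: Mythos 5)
Your proposal is correct and follows essentially the same route as the paper: the paper in fact gives no standalone proof of this lemma, stating it as a summary of the immediately preceding derivation, in which the parametrisation (\ref{eq: change of variables angles}) is substituted into the interior equilibrium plane (\ref{eq: equilibrium}) to yield (\ref{eq: equilibiria tori}), which is then reduced to (\ref{eq: psi solution}) and (\ref{eq: value of phi}), with membership in the rectangles of Lemma \ref{st: lemma intervals of angles} as the criterion for the equilibrium to lie in the image of $(0,1)^4$. Your one genuine addition---verifying that the square-and-sum reduction creates no spurious branch (with $a=\sqrt{C_1}\sin\phi$, $b=\sqrt{C_1}\cos\phi$ and $A,B$ the right-hand sides of (\ref{eq: equilibiria tori}), the reduced system only forces $\{a,-b\}=\{A,-B\}$, leaving the extraneous possibility $a=-B$, $b=-A$ to be excluded by matching branches against (\ref{eq: equation 1 tori final form}))---is a point the paper leaves entirely implicit and is a worthwhile sharpening of its argument.
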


These observations, as well as elementary properties of the $\arcsin$ function, lead to: 
\begin{corollary}
    For any given value of $C_1, C_2$ and $c$ there can be no more than four equilibria inside the square from Lemma \ref{st: lemma intervals of angles}. 
\end{corollary}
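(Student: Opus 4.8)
The plan is to reduce the count of equilibria to the count of solutions of a single trigonometric equation in $\psi$, and then to observe that each admissible $\psi$ pins down the companion angle $\phi$ uniquely. An equilibrium inside the square is a pair $(\phi,\psi)$ solving the system \eqref{eq: equilibiria tori}. Rather than working through the two arcsine branches of \eqref{eq: value of phi}, I would eliminate $\phi$ at the outset by squaring both equations of \eqref{eq: equilibiria tori} and adding them. Using $\sin^2\phi+\cos^2\phi=1$, the left-hand side collapses to $C_1$ and the right-hand side is precisely the quantity recorded just before \eqref{eq: psi solution}, so every equilibrium must satisfy the compatibility condition
\begin{equation*}
\sin\!\left(2\psi-\arccos\!\Big(\tfrac{C}{\sqrt{1+C^2}}\Big)\right)=\frac{C_2+2C_2C^2-C_1}{2C\sqrt{1+C^2}\,C_2}.
\end{equation*}
Writing $\alpha:=\arccos\!\big(C/\sqrt{1+C^2}\big)$ and letting $\beta$ denote the right-hand side (both depending only on $C_1,C_2,C$), this is a necessary condition on the $\psi$-coordinate of any equilibrium.

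The first key step is to count the solutions of this equation. As $\psi$ ranges over the circle $[0,2\pi)$, the argument $2\psi-\alpha$ sweeps an interval of length exactly $4\pi$, that is, two full periods of the sine. On each period the equation $\sin(\,\cdot\,)=\beta$ has at most two solutions (two when $|\beta|<1$, one when $|\beta|=1$, none when $|\beta|>1$), so the displayed equation has at most four solutions $\psi\in[0,2\pi)$. This is exactly the ``elementary property of $\arcsin$'' alluded to above, and it matches the four families listed in \eqref{eq: psi solution} once the $2\pi$-redundant representatives are discarded. Since squaring can only enlarge the solution set, the true set of admissible $\psi$ is contained in these (at most) four values.

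The second key step removes the apparent doubling suggested by \eqref{eq: value of phi}. For any $\psi$ satisfying the compatibility condition the relation $\sin^2\phi+\cos^2\phi=1$ holds automatically, and the original (un-squared) equations \eqref{eq: equilibiria tori} then prescribe both $\sin\phi$ and $\cos\phi$ separately; hence $\phi\in[0,2\pi)$ is determined uniquely, and only one of the two arcsine branches in \eqref{eq: value of phi} is consistent with \eqref{eq: equilibiria tori}. Thus each admissible $\psi$ contributes exactly one equilibrium, not two. Combining the two steps, the whole torus carries at most four equilibria, and since the rectangle of Lemma \ref{st: lemma intervals of angles} is a subset of the torus, restricting to it can only delete solutions; therefore at most four equilibria lie inside the square, as claimed.

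I expect the main obstacle to be precisely this second step. The presentation via \eqref{eq: psi solution}--\eqref{eq: value of phi} invites pairing each of the (up to four) values of $\psi$ with each of the two branches for $\phi$, which would spuriously produce eight equilibria; the substantive content of the corollary is the uniqueness of $\phi$ given $\psi$. I would therefore emphasise that this uniqueness must be read off from the full system \eqref{eq: equilibiria tori}, where $\sin\phi$ and $\cos\phi$ are separately fixed, rather than from the difference equation \eqref{eq: equation 1 tori final form} alone, which retains both branches. Making this explicit is what rules out over-counting and yields the sharp bound of four.
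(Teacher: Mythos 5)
Your proof is correct, but it factors the count differently from the paper. The paper's (very terse) argument reads the bound as ``at most two admissible $\psi$ times up to two $\phi$-branches per $\psi$'': within the rectangle of Lemma \ref{st: lemma intervals of angles} the $\psi$-interval has length at most $\pi/2$, so each of the two $\pi$-periodic families in \eqref{eq: psi solution} contributes at most one value, and each such $\psi$ is then paired with the two branches of \eqref{eq: value of phi} --- the proof explicitly invokes the fact that ``one solution in $\psi$ can correspond to two solutions in $\phi$''. You instead bound the number of $\psi$-solutions by four over the whole circle (the argument $2\psi-\alpha$ sweeps two full sine periods) and then show that $\phi$ is determined uniquely by $\psi$, because the unsquared system \eqref{eq: equilibiria tori} prescribes $\sin\phi$ and $\cos\phi$ separately, and the compatibility condition guarantees these prescribed values lie on the unit circle (this needs $C_1>0$, which holds whenever the rectangle is nonempty). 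Your uniqueness step is a genuine refinement rather than a restatement: it shows the second branch in \eqref{eq: value of phi} is extraneous to the full system, being retained only by the difference equation \eqref{eq: equation 1 tori final form}, whereas the paper's counting keeps both branches and compensates by restricting $\psi$ to the window. As a payoff, your route yields the stronger conclusion that there are at most four equilibria on the \emph{entire} torus, of which the rectangle bound is an immediate consequence; the paper's decomposition only bounds the count inside the rectangle, and read naively over the whole circle (four values of $\psi$ times two branches of $\phi$) it would produce eight candidates, an over-count that your analysis correctly prunes.
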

\begin{proof}
    This is clear from the form of the equations, as well as the fact that one solution in $\psi$ can correspond to two solutions in $\phi$. 
\end{proof}

    
\begin{remark}
    We have already discussed the stability of the equilibria above, in Section \ref{sec: memory 1}. As one can see, this is much harder to do in toric coordinates. 
\end{remark}
\subsubsection{Degenerate Tori}
Lastly, we want to quickly address the motion on degenerate tori, i.e. the ones that have one of the parameters $C_1$ or $C_2$ very small compared to the other. 

One can re-parametrise the motion on the 
It can be checked that for the original system \ref{eq: equations on tori general} the degenerate tori consist entirely of singularities. Therefore, prior to considering the behaviour of the system, one needs to desingularise it via multiplication by the denominator. Performing the same coordinate changes, we arrive at the same result that we would have achieved by multiplying the system \eqref{eq: equations on tori general} by the smallest common denominator of the denominators of the two equations. In this section, we will by default consider the \textbf{desingularised} system.\\

It has the form
\begin{equation}
    \label{eq: desingularised toric}
    \begin{split}
    \begin{cases}
   \dot{\phi} =&  \frac{1}{2} C_2 \cos (\psi ) \Bigg(2 \sqrt{C_1} \cos (\phi ) \Big(\sin (\psi ) \left(-C+\sqrt{C_2} \sin (\psi )+1\right)\\&+\cos (\psi ) \left(2 C-(1-2 C) \sqrt{C_2} \sin (\psi )\right)-C \sqrt{C_2}\Big)\\&+\sqrt{C_1} \sin (\phi ) \Big(-(2 C+1) \sqrt{C_2} \sin (2 \psi )+(2 C+1) \sqrt{C_2}+2 (C+1) \sin (\psi )\\&-4 (C+1) \cos (\psi )+\sqrt{C_2} \cos (2 \psi )\Big)+(1-C) \sqrt{C_2} (\sin (2 \psi )+\cos (2 \psi )-1)\Bigg),\\
   \dot{\psi}=&-\sqrt{C_1} \sin (\phi ) \Bigg(-C_1 \cos ^2(\phi ) \left(\sqrt{C_2} ((1-C) \sin (\psi )+c \cos (\psi ))-C+1\right)\\&+C_1 \sin (\phi ) \cos (\phi ) \Big(\sqrt{C_2} ((1-2 C) \sin (\psi )+(2 C+1) \cos (\psi ))-C+1\Big)\\&+\sqrt{C_1} \sqrt{C_2} \cos (\phi ) ((1-C) \sin (\psi )+(C+1) \cos (\psi ))\\&-\sqrt{C_1} \sqrt{C_2} \sin (\phi ) \left(\sqrt{C_1} \sin (\phi )+2\right) \big((C+1) \cos (\psi )-C \sin (\psi )\big)\Bigg)
   \end{cases}
    \end{split}
\end{equation}
\noindent
This is a system on $\mathbb{T}^2$ that has a number of degenerate equilibrium points. \\
\noindent
$\mathbf{C_1}=0$. \\
We primarily address the case of $C_1=0$; it can be seen that when $C_2=0$, the situation is completely analogous.\\

The second equation of \eqref{eq: desingularised toric} has the form $C_1 F_1(\phi, \psi, C_2) + C_1^{\frac32}F_2(\phi,\psi, C_2)$. Therefore, both equations have the form of the so-called \textit{fast} dynamics:
\[
\begin{cases}
\dot{\phi} = G_1(\phi,\psi,C_2) + \sqrt{C_1}G_2(\phi,\psi,C_2)\\
\dot{\psi} = C_1\left(F_1(\phi,\psi,C_2) + \sqrt{C_1}F_2(\phi,\psi,C_2)\right).
\end{cases}
\]
Therefore, setting $C_1\approx 0$ should give us an insight into the dynamics of the system.\\
\begin{figure}
    \centering

\tikzset{every picture/.style={line width=0.75pt}} 

\begin{tikzpicture}[x=0.75pt,y=0.75pt,yscale=-1,xscale=1]

\draw    (92.83,119.67) .. controls (122.83,70.83) and (272.83,77.33) .. (288.83,118.83) ;
\draw   (75,117.08) .. controls (75,91.08) and (127.23,70) .. (191.67,70) .. controls (256.1,70) and (308.33,91.08) .. (308.33,117.08) .. controls (308.33,143.09) and (256.1,164.17) .. (191.67,164.17) .. controls (127.23,164.17) and (75,143.09) .. (75,117.08) -- cycle ;
\draw    (90.83,117.17) .. controls (110.83,148.17) and (257.33,164.67) .. (291.83,114.17) ;
\draw   (105.18,138.08) .. controls (108.3,134.08) and (112.47,131.73) .. (114.49,132.84) .. controls (116.51,133.94) and (115.61,138.09) .. (112.49,142.09) .. controls (109.37,146.09) and (105.2,148.44) .. (103.18,147.33) .. controls (101.16,146.22) and (102.05,142.08) .. (105.18,138.08) -- cycle ;
\draw   (219.12,149.16) .. controls (218.96,145.11) and (220.16,144.15) .. (221.8,147.01) .. controls (223.43,149.86) and (224.89,155.46) .. (225.05,159.51) .. controls (225.2,163.55) and (224.01,164.52) .. (222.37,161.66) .. controls (220.74,158.8) and (219.28,153.21) .. (219.12,149.16) -- cycle ;
\draw   (110.83,136.83) .. controls (111.02,140.44) and (110.49,143.55) .. (109.26,146.15) .. controls (111.2,144.05) and (113.86,142.46) .. (117.22,141.38) ;
\draw   (229.83,154.33) .. controls (226.72,152.94) and (224.5,151.23) .. (223.2,149.19) .. controls (223.6,151.56) and (223.09,154.25) .. (221.68,157.27) ;
\draw   (284.67,130.21) .. controls (281.45,126.78) and (281.61,124) .. (285.05,124) .. controls (288.48,124) and (293.87,126.78) .. (297.09,130.21) .. controls (300.32,133.65) and (300.15,136.43) .. (296.72,136.43) .. controls (293.29,136.43) and (287.89,133.65) .. (284.67,130.21) -- cycle ;
\draw   (297.75,124.93) .. controls (294.47,125.89) and (291.68,126.02) .. (289.36,125.3) .. controls (291.2,126.85) and (292.55,129.23) .. (293.43,132.45) ;
\draw   (116.52,88.08) .. controls (113.53,84.63) and (113.91,81.83) .. (117.36,81.83) .. controls (120.81,81.83) and (126.03,84.63) .. (129.02,88.08) .. controls (132,91.54) and (131.62,94.33) .. (128.17,94.33) .. controls (124.72,94.33) and (119.5,91.54) .. (116.52,88.08) -- cycle ;
\draw   (239.01,83.5) .. controls (241.42,79.73) and (246.43,76.67) .. (250.21,76.67) .. controls (253.98,76.67) and (255.09,79.73) .. (252.68,83.5) .. controls (250.27,87.27) and (245.26,90.33) .. (241.49,90.33) .. controls (237.71,90.33) and (236.61,87.27) .. (239.01,83.5) -- cycle ;
\draw   (249.69,92.66) .. controls (249.87,89.25) and (250.67,86.56) .. (252.1,84.61) .. controls (250.04,85.84) and (247.35,86.34) .. (244.01,86.11) ;
\draw   (120.03,86.55) .. controls (123.39,85.91) and (126.18,86.05) .. (128.42,86.98) .. controls (126.74,85.27) and (125.61,82.77) .. (125.05,79.48) ;

\end{tikzpicture}
    \caption{Motion on almost degenerate tori. Without loss of generality, we may assume that the ``smaller" circles have radius $C_1$ and the ``larger" circles have radius $C_2$. The motion will be on the trajectories that are very close to the small circles.}
    \label{fig:placeholder}
\end{figure}
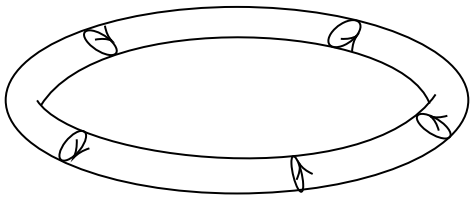
The result of making $C_1\to 0$ is the so-called slow-fast dynamics: the part proportional to $C_1$ will be the slow part ($\dot{\psi}$), whereas the ``fast" part is the dynamics of $\phi$. However, $C_1$ is the radius of the circle in $\phi$. Therefore, the dynamics are going to move in the diminishing direction until it ``collapses". 
\begin{lemma}
    The trajectories on almost degenerate tori are aperiodic.
\end{lemma}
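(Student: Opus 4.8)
The plan is to exploit the slow--fast structure of the desingularised system \eqref{eq: desingularised toric} that the text has already isolated: on an almost degenerate torus, as $C_1\to 0$, the angle $\phi$ is fast and $\psi$ is slow. Concretely, I would first expand both right-hand sides of \eqref{eq: desingularised toric} in powers of $\sqrt{C_1}$. The $\phi$-equation carries a leading term of order $C_1^0$, namely $\dot\phi = G_0(\psi) + O(\sqrt{C_1})$ with $G_0(\psi)=\tfrac12(1-C)C_2^{3/2}\cos\psi\,(\sin 2\psi+\cos 2\psi-1)$, which crucially does not depend on $\phi$; the $\psi$-equation begins only at order $C_1$, $\dot\psi = C_1\sqrt{C_2}\,\Psi(\phi,\psi)+O(C_1^{3/2})$ with $\Psi(\phi,\psi) = -\sin\phi\bigl(\cos\phi\,a(\psi)-2\sin\phi\,b(\psi)\bigr)$, where $a(\psi)=(1-C)\sin\psi+(C+1)\cos\psi$ and $b(\psi)=(C+1)\cos\psi-C\sin\psi$. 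Thus, with $\psi$ frozen, $\phi$ rotates uniformly at speed $G_0(\psi)$.

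Next I would average over the fast angle. Because $G_0$ is independent of $\phi$, the invariant measure of the frozen fast flow is uniform, and the averaging principle for slow--fast systems reduces the long-time behaviour of the slow variable to the one-dimensional flow $\dot\psi = \overline{\dot\psi}(\psi)+O(C_1^{3/2})$, where $\overline{\dot\psi}(\psi)=\tfrac{C_1\sqrt{C_2}}{2\pi}\int_0^{2\pi}\Psi(\phi,\psi)\,d\phi$. Using $\int_0^{2\pi}\sin\phi\cos\phi\,d\phi=0$ and $\int_0^{2\pi}\sin^2\phi\,d\phi=\pi$ this collapses to $\overline{\dot\psi}(\psi)=C_1\sqrt{C_2}\,\bigl((C+1)\cos\psi-C\sin\psi\bigr)$.

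Then I would localise to the physically relevant part of the torus. By Lemma \ref{st: lemma intervals of angles} the torus meets the cube $[0,1]^4$ only for $\psi$ in an arc contained in the fourth quadrant, where $\cos\psi>0$ and $\sin\psi<0$; since $0<C<1$, both summands $(C+1)\cos\psi$ and $-C\sin\psi$ are then strictly positive, so $\overline{\dot\psi}(\psi)>0$ and is bounded below by a positive constant on the compact arc. Hence for all sufficiently small $C_1$ the true slow variable inherits a strictly monotone (increasing) net drift over each revolution of $\phi$, so $\psi(t)$ never returns to an earlier value. A periodic trajectory would force $\psi(t+T)=\psi(t)$, a contradiction; therefore no trajectory in this region is periodic, which is the assertion.

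The main obstacle is upgrading the averaging step from heuristic to rigorous. Two points need care. First, $G_0(\psi)$ vanishes at the endpoints of the arc ($\cos\psi=0$ at $\psi=3\pi/2$, and $\sin 2\psi+\cos 2\psi-1=0$ at $\psi=2\pi$), so the fast rotation stalls there and the uniform-averaging estimate degrades near these turning points; I would either restrict to the sub-arc on which $G_0$ is bounded away from zero (which, by Lemma \ref{st: lemma intervals of angles}, is automatic when $C_2>1$) or analyse the turning points separately. Second, I must control the $O(C_1^{3/2})$ remainder uniformly in $\phi$ so that the strict positivity of $\overline{\dot\psi}$ transfers to the non-averaged drift; invoking a standard averaging/Fenichel estimate on the compact interior arc, where $G_0\neq 0$, supplies the required uniform bound and closes the argument.
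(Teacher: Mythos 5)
Your proposal follows essentially the same route as the paper: you use the same slow--fast splitting of \eqref{eq: desingularised toric} with $\phi$ fast and $\psi$ slow, average the $O(C_1)$ coefficient of $\dot\psi$ over the fast angle, and obtain exactly the paper's averaged drift $\sqrt{C_2}\bigl((C+1)\cos\psi - C\sin\psi\bigr)$ (up to the $1/2\pi$ normalisation), concluding aperiodicity from its nonvanishing. If anything, your version is tighter than the paper's, which only asserts the average is nonzero ``for the majority of $\psi$'': by localising to the fourth-quadrant arc from Lemma \ref{st: lemma intervals of angles} you upgrade this to strict positivity of the drift, and you correctly flag the two points the paper glosses over, namely the endpoints where the fast rotation speed $G_0(\psi)$ vanishes and the uniform control of the $O(C_1^{3/2})$ remainder needed to make the averaging rigorous.
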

\begin{proof}
    The function-coefficient in $\dot{\psi}$ at $C_1$ has the form 
    \begin{equation*}
        \begin{split}
            F(\phi,\psi) =& -\sin (\phi ) \Big(\sqrt{C_2} \cos (\phi ) ((1-C) \sin (\psi )+(C+1) \cos (\psi ))\\&-2 \sqrt{C_2} \sin (\phi ) ((C+1) \cos (\psi )-C \sin (\psi ))\Big)
        \end{split}
    \end{equation*}
    Averaging this function over $\phi\in[0,2\pi]$ yields
    \[
\int\limits_0^{2\pi}F(\phi,\psi)\mathrm{d}\phi = 2 \pi  \sqrt{C_2} ((C+1) \cos (\psi )-c \sin (\psi ))
    \]
    This is not equal to $0$ for the majority of $\psi$; therefore, the motion is aperiodic. 
\end{proof}
\noindent
$\mathbf{C_2}=0$. \\
\noindent
This case is treated absolutely analogously, down to the powers of $C_2$ involved.  

\section{Conclusion}
In this paper, we considered an alternative decision making in  the prisoner's dilemma: the one where players take turns making decisions. 

While the setup of the game (the transition matrix, the payoff vector, etc.) is significantly different from the simultaneous case, the algorithms and the methods used to set up adaptive dynamics and to demonstrate some fundamental symmetries of the system are quite similar.

Thus, we were able to provide recursive constructions for the transition matrix and the payoff vector, describe the class of admissible orthogonal matrices and demonstrate that the adaptive dynamics possess the same discrete $\mathbf{Z}_2$-symmetry as the simultaneous ones do. 

However, the adaptive dynamics themselves are very different from the ones in  \cite{balabanova2024adaptive}. The main difference lies in existence of invariant quantities that have a simple explicit form and allow us to foliate the space into tori and thus reduce the motion.  We write explicit conditions for existence of equilibria on the tori, as well as give estimates on the number of euiqlibria inside the interaction of $(0,1)^4$ cube with  a given torus. Lastly, we consider the dynamics on degenerate tori and demonstrate that the trajectories of the system lie very close to perioic circular trajectories.

\end{document}